\title{Combining Determinism and Indeterminism}
\author{Michael Stephen Fiske}
\date{November 19, 2020}
\chardef\bslash=`\\ % p. 424, TeXbook
\newtheorem{thm}{Theorem}[section]
\newtheorem{cor}[thm]{Corollary}
\newtheorem{lem}[thm]{Lemma}
\theoremstyle{definition}
\newtheorem{defn}{Definition}[section]
\theoremstyle{remark}
\newtheorem{rem}{Remark}[section]
\newtheorem{example}{Example}
\newcommand{\eval}[2][\right]{\relax
  \ifx#1\right\relax \left.\fi#2#1\rvert}
\begin{document}

\maketitle

\begin{abstract}
Our goal is to construct mathematical operations that combine indeterminism measured from quantum randomness 
with computational determinism so that non-mechanistic behavior is preserved in the computation.    
Formally, some results about operations applied to computably enumerable (c.e.) and bi-immune sets are proven here, 
where the objective is for the operations to preserve bi-immunity.  While developing rearrangement operations on the 
natural numbers, we discovered that the bi-immune rearrangements generate an uncountable subgroup of the infinite symmetric group (Sym$(\mathbb{N})$) on the natural numbers $\mathbb{N}$.

This new uncountable subgroup is called the bi-immune symmetric group.  
We show that the bi-immune symmetric group contains the finitary symmetric group on 
$\mathbb{N}$, and consequently is highly transitive. 
Furthermore, the bi-immune symmetric group is dense in Sym$(\mathbb{N})$ with 
respect to the pointwise convergence topology.    
The complete structure of the bi-immune symmetric group  
and its subgroups generated by one or more bi-immune rearrangements is unknown.  
\end{abstract}

%%%%%%%%%%%%%%%%%%%%%%%%%%%%%%%%%%%%%%%%%%%%%%%%%%%%%%%%%%%%%%%%%%%%%
%%   MAIN Sections
%%%%%%%%%%%%%%%%%%%%%%%%%%%%%%%%%%%%%%%%%%%%%%%%%%%%%%%%%%%%%%%%%%%%%

%%  
%%  bi_immune_intro.tex
%%
%%  Author:  Michael Stephen Fiske

%%%%%%%%%%%%%%%%%%%%%%%%%%%%%%%%%%%%%%%%%%%%%%%%%%%%%%%%%%%
%%
\section{Introduction}
\label{sec:1}

In \cite{fiske_qaem}, a lemma about the symmetric difference operator applied to a computably enumerable (c.e.) set and 
bi-immune set is stated without proof.   Herein lemma  \ref{lem:recursive_xor_bi_immune}  provides a proof;  
this helps  characterize procedure 2's non-mechanistic behavior in  \cite{fiske_TIC}.   
Moreover, the preservation of bi-immunity by the symmetric difference operator helped motivate the 
conception of rearrangement operations.  The preservation of bi-immunity led to the notion of 
generating subgroups of Sym$(\mathbb{N})$ with one or more bi-immune rearrangements.   
The structure of the infinite subgroups of Sym($\mathbb{N}$) is an active area of research in group theory.

\subsection{Notation and Conventions}
We summarize our notation and conventions, since the literature in computer science, 
computability theory, and group theory  do not always use the same ones.  
$\mathbb{N}$  is the  non-negative integers.      %%  $\mathbb{N}_+ = \mathbb{N} - \{0\}$.  
$\mathbb{E} = \{ 2n: n  \in \mathbb{N} \}$ are the even, non-negative integers. 
$\mathbb{O} = \{ 2n+1: n  \in \mathbb{N} \}$ are the odd, non-negative integers. 
If $S$ is a set, $|S|$ is the cardinality of $S$.   
The symbol $\circ$ represents function composition;   if  $f: X \rightarrow Y$ and $g: Y \rightarrow Z$ then 
$g \circ f$ means apply function $f$ to an element of $x$ in $X$, and then apply function $g$ to argument  $f(x)$ in $Y$.

Let $a_0$ $a_1$ $\dots$ $\in \{0, 1\}^{\mathbb{N}}$ be a binary sequence. 
The sequence $a_0$ $a_1$ $\dots$  induces a set $A \subset \mathbb{N}$ and vice versa:  
the {\it standard identification}   of $A$ with the sequence $a_0$ $a_1$ $\dots$ means $k  \in A$ 
if and only if $a_k = 1$.  Also, $k \notin A$ iff $a_k = 0$.   
$\overline{A} = \{ n \in \mathbb{N}: n \notin A \}$ is the complement of set $A$ in $\mathbb{N}$.  
The relative complement is  $A- B  = \{x \in A:  x \notin B \}$.  In computer 
science, $\oplus$ is exclusive-or:  $0 \oplus 0 = 1 \oplus 1 = 0$ and $1 \oplus 0 = 0 \oplus 1 = 1.$ 
Due to the identification between a binary sequence and a subset of 
$\mathbb{N}$,  the symmetric difference of $A$ and $B$ is represented as
 $A \oplus B =  (A-B)  \cup (B-A)$,  instead of the usual $\Delta$.  
 Herein symbol  $\oplus$ never represents the {\it join} operation as used in \cite{downey}.

%%   $= \{k:   k \in A \wedge k \notin B\} \cup  \{k:  k \notin A \wedge k \in B \}$.     

%% The modern expressions {\it computably enumerable} (c.e.)  and {\it computable} will be used instead 
%%  of {\it recursively enumerable} and {\it recursive}, respectively.  As usual,
%% $\mathcal{W}_k = domain(\varphi_k)$ is an enumeration of all c.e. sets.  In some contexts, the function $\varphi_k$ will be identified with the 
%% Turing machine  that computes $\varphi_k$, using the expression {\it machine} $\varphi_k$ {\it computes}.

%%  
%%  bi_immune_lemmas.tex
%%
%%  Author:  Michael Stephen Fiske
%%%%%%%%%%%%%%%%%%%%%%%%%%%%%%%%%%%%%%%%%%%%%%%%%%%%%%%%%%%

%%%%%%%%%%%%%%%%%%%%%%%%%%%%%%%%%%%%%%%%%%%%%%%%%%%%%%%%
%%%%%%%% %%%   TWO BI-IMMUNE LEMMAS    
%%%%%%%%%%%%%%%%%%%%%%%%%%%%%%%%%%%%%%%%%%%%%%%%%%%%%%%%

\section{Preserving Non-Mechanistic Behavior}

Our goal is to construct operations that combine indeterminism measured from quantum randomness \cite{herrero}  
with computational determinism \cite{turing} so that the non-mechanistic behavior (bi-immunity) is preserved.   
%%  The following two lemmas formally address our intuition.  

In \cite{post}, Post introduced the notion of an immune set. 
%%  Recall the definition of an immune and bi-immune set \cite{rogers}. 
\begin{defn}\label{defn_bi_immune}   \hskip 1pc   Set $A \subset \mathbb{N}$ is    
 {\it immune} if conditions (i) and (ii) hold.  

\smallskip 

\noindent   (i)   $A$  is infinite.

\smallskip  

\noindent    (ii)  For all $R \subset \mathbb{N}$, $(R$ is infinite and 
computably enumerable$) \implies$   $R \cap \overline{A} \ne \emptyset$. 

\medskip

\noindent  Set  $A$  is {\it bi-immune} if  both $A$  and $\overline{A}$ are  immune.  

\end{defn}

\begin{lem}\label{bi_finite_set}

Suppose $A$ is bi-immune.  Let $R$ be a finite set.   Then $ A \cup R$ 
and $A - R$ are both bi-immune.  

\end{lem}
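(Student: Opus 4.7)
The plan is to unpack bi-immunity into its four component conditions and verify each one. Recall that $A$ bi-immune means both $A$ and $\overline{A}$ are immune, so for each of $A \cup R$ and $A - R$ I need to check (a) the set itself is infinite, (b) no infinite c.e.\ set is contained in it, (c) its complement is infinite, and (d) no infinite c.e.\ set is contained in the complement. The set-theoretic identities $\overline{A \cup R} = \overline{A} - R$ and $\overline{A - R} = \overline{A} \cup R$ let me reduce all four cases to a common template: starting from an immune set $B$ (taking $B = A$ or $B = \overline{A}$), show that $B \cup R$ and $B - R$ remain immune whenever $R$ is finite.

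The infiniteness clauses (a) and (c) are immediate: since $R$ is finite and $B$ is infinite, both $B \cup R$ and $B - R$ are infinite. The heart of the argument is the c.e.\ subset clauses. The key facts are that any finite set is computable (hence c.e.), and the c.e.\ sets are closed under union and under set difference by a computable set. So if $S$ is an infinite c.e.\ subset of $B \cup R$, then $S - R$ is still c.e., still infinite (only finitely many elements were removed), and now contained in $B$, contradicting the immunity of $B$. If instead $S$ is an infinite c.e.\ subset of $B - R$, then $S \subseteq B$ is automatic, giving an immediate contradiction. Applying this template with $B = A$ covers the immunity of $A \cup R$ and $A - R$; applying it with $B = \overline{A}$ covers the immunity of their complements.

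I expect no real obstacle beyond careful bookkeeping of the four cases. The lemma reduces entirely to the closure of c.e.\ sets under the relevant finite-set operations together with the trivial fact that perturbing an infinite set by finitely many elements changes neither its infiniteness nor the existence of an infinite c.e.\ subset inside it.
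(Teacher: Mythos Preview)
Your proof is correct and follows essentially the same approach as the paper: both arguments rest on the observation that adjoining or removing finitely many elements from an infinite c.e.\ set yields another infinite c.e.\ set, so any hypothetical infinite c.e.\ subset of $A\cup R$, $A-R$, or their complements can be massaged into an infinite c.e.\ subset of $A$ or $\overline{A}$. Your version is in fact more carefully organized than the paper's---you explicitly reduce to the template ``immune set perturbed by a finite set stays immune'' and handle all four cases via the complement identities, whereas the paper checks one case and gestures at the rest.
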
   

\begin{proof}
Set  $A^+ =  A \cup  R$ and $A^- =  A -  R$.   
From definition \ref{defn_bi_immune}, $A^+$ and  $A^-$ are still infinite because a finite number of elements have been
added to or removed from $A$, respectively.   For condition (ii), suppose there exists a computably enumerable set $B$ 
such that $B \cap \overline{A^+} = \emptyset$.   Then $B \cap \overline{A^+}$
and $B \cap \overline{A}$ only differ on a finite number of elements which implies that a c.e. set $B'$ 
can be constructed from $B$ such that $B' \cap \overline{A} = \emptyset$.  Contradiction.   
The same argument holds for $A^-$.  %%  $\square$
\end{proof}

%%  \bigskip

Lemma \ref{bi_finite_set} does not hold if $R$ is an infinite, computable set.  
$A \cup  \mathbb{E}$ is not immune as  ${\mathbb{E}} \cap (\overline{ A \cup {\mathbb{E}}}) = \emptyset$.
Also,  $\overline{ A - {\mathbb{E}}}$ is not immune    
as $\mathbb{E} \cap (A - \mathbb{E}) = \emptyset$.  
While the isolated operations of union and relative complement do not preserve bi-immunity, 
the symmetric difference operation preserves bi-immunity.

\begin{lem}\label{lem:recursive_xor_bi_immune}

If $R$ is c.e.  and $A$ is bi-immune,  then   $A \oplus R$ is bi-immune.   

\end{lem}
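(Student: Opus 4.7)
The plan is to attack both halves of bi-immunity, showing that neither $A\oplus R$ nor its complement contains an infinite c.e.\ subset, and that both are infinite. Observe first that $\overline{A\oplus R}=\overline{A}\oplus R$, so once the result is proved for $A\oplus R$ using the immunity of $A$ and $\overline{A}$, the same argument applied to the bi-immune set $\overline{A}$ in place of $A$ handles the complement. So the real work is to prove immunity of $A\oplus R$.

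First I would dispose of infiniteness. Writing $A\oplus R=(A-R)\cup(R-A)$, if $A-R$ were finite then $A\cap R$ would be cofinite in $A$; but $A\cap R$ is c.e.\ (intersection of c.e.\ sets), which would exhibit an infinite c.e.\ subset of $A$, contradicting the immunity of $A$. Hence $A-R$, and therefore $A\oplus R$, is infinite.

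For the main immunity claim, suppose toward a contradiction that $B\subseteq A\oplus R$ is infinite and c.e. I would split on the size of $B\cap R$. If $B\cap R$ is infinite, then since $B\cap R\subseteq R-A\subseteq\overline{A}$ and $B\cap R$ is c.e., we have an infinite c.e.\ subset of $\overline{A}$, contradicting the immunity of $\overline{A}$. If $B\cap R$ is finite, set $F=B\cap R$; then $F\subseteq R$ gives $B-R=B-F$, and removing a finite set from a c.e.\ set leaves a c.e.\ set (this is where Lemma~\ref{bi_finite_set}'s style of finite-perturbation argument comes in). Thus $B-R$ is an infinite c.e.\ subset of $A-R\subseteq A$, contradicting the immunity of $A$. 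Either way we reach a contradiction, so $A\oplus R$ is immune.

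The step I expect to be the main obstacle is the finite case of the dichotomy: one must not overlook that $R$ alone being c.e.\ does not make $B-R$ c.e., and the proof genuinely uses the additional fact that $B\cap R$ is finite in order to rewrite $B-R$ as $B$ minus a finite (hence computable) set. Everything else — the complement half via $\overline{A\oplus R}=\overline{A}\oplus R$, and the infiniteness of $A\oplus R$ — is a short symmetric application of the same immunity arguments, so the whole proof hinges on getting this c.e.-closure step right.
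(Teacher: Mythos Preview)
Your immunity argument for condition~(ii) is correct and essentially matches the paper's: both split on whether $B\cap R$ is infinite, and in the finite case both pass to a c.e.\ set obtained from $B$ by removing finitely many elements. Your use of the identity $\overline{A\oplus R}=\overline{A}\oplus R$ to collapse the two halves of bi-immunity into one is a clean streamlining that the paper does not make; the paper instead runs the two cases $B\subset Q$ and $B\subset\overline{Q}$ separately.

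There is, however, a genuine gap in your infiniteness argument. You write that ``$A\cap R$ is c.e.\ (intersection of c.e.\ sets)'', but $A$ is bi-immune and hence not c.e., so this justification fails. Worse, the conclusion you draw from it, that $A-R$ is infinite, is false in general: take $R=\mathbb{N}$, which is c.e., and then $A-R=\emptyset$. So the argument cannot be repaired by supplying a different reason for the same claim.

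The paper fixes this by assuming instead that the whole of $A\oplus R$ is finite; then \emph{both} $A-R$ and $R-A$ are finite, and now $A\cap R=R\setminus(R-A)$ is $R$ with finitely many elements removed, hence c.e., yielding the desired contradiction. Alternatively, you can bypass a separate infiniteness argument altogether: once condition~(ii) is established for both $A\oplus R$ and its complement, neither can be cofinite (a cofinite set contains an infinite c.e.\ tail of $\mathbb{N}$), so both are infinite. Either route closes the gap; as written, your proof of infiniteness does not.
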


\begin{proof}
{\it Condition (i)}.  Verify that  $A \oplus R$  is infinite.   
For the case that $R$ is finite, let $K$ be the largest element in $R$.   
Then $A \oplus R$ is the disjoint union of the finite set  $\{x \in  A \oplus R:   x \le K \}$ and the infinite set  
$\{x \in A:  x > K\}$ since $A$ is bi-immune.

Otherwise, $R$ is infinite.   By contradiction, suppose $A \oplus R$  is finite.    
$A$ is the disjoint  union of $A-R$ and $A \cap R$, so 
$A$'s bi-immunity implies $A \cap R$ is infinite.  
Also, let $R-A = \{ r_1, r_2  \dots, r_m  \}$ since it is finite.  

\smallskip

Claim:  $A \cap R$ is c.e.  
Consider Turing machine $M$ that enumerates $R$.   
Whenever $M$ halts, concatenate machine $N$ to execute after machine $M$:

\begin{itemize} 

\item{Machine $N$ does not halt if $M$ halts with $r_k$ in $R-A$.}    %%  where $1 \le k \le m$.

\smallskip 

\item{For all other outputs where $M$ halts, after machine $N$ checks that $M$'s output is not in $R-A$, then $N$ immediately halts.}

\end{itemize}  

%%  \medskip   

\noindent Hence, $A \cap R$ is infinite and c.e., contradicting $A$'s bi-immunity, so $A \oplus R$  
must be infinite.

\medskip

%%%%%%%%%%%%%%%%%%%%%%%%%%%%%%%%%%%%%%%%%%%%%%%%%%%%%%%%%%%%%%%%%
%%   Then $A$'s bi-immunity implies that  $A \cap R$ is infinite because
%%   $A$ is the disjoint union of $A-R$ and $A \cap R$, which contradicts that $R$ is c.e.   and $A$ is bi-immune.   
%%%%%%%%%%%%%%%%%%%%%%%%%%%%%%%%%%%%%%%%%%%%%%%%%%%%%%%%%%%%%%%%%

\noindent  {\it Condition (ii)}.   Set $Q = A \oplus R$.   

\smallskip

By contradiction, suppose there exists an infinite, c.e. set $B$  with 
$B \cap \overline{Q} = \emptyset$.   Then $B \subset Q$.   Also, $B \cap R$ is c.e.    Now
$B \cap R \subset R - A \subset \overline{A}$ 
which contradicts that $A$ is bi-immune, if   $B \cap R$ 
is infinite. Otherwise, $B \cap R$ is finite.  Set $K = max(B \cap R)$.   
Then $B \cap (A-R)$ is infinite.  Define the infinite, c.e. set $B'  = \{ x \in B: x > K\}$.  
Thus, $B' \subset A$ contradicts $A$'s bi-immunity.  

\smallskip

Similarly, by contradiction, suppose there exists an infinite, c.e. set $B$  with 
$B \cap Q = \emptyset$.  Then $B \subset  \overline{Q}$.  Also, $B \cap R$ is c.e.  Thus,
$B \cap R \subset A \cap R$ which contradicts that $A$ is bi-immune if  $B \cap R$ is infinite. 
Otherwise, $B \cap R$ is finite.  Set $K = max(B \cap R)$.  
Define the infinite, c.e. set $B'  = \{ x \in B: x > K\}$.    
Thus, $B' \subset \overline{A}$, which contradicts $A$'s bi-immunity.    
\end{proof}

%%%%%%%%%%%%%%%%%%%%%%%%%%%%%%%%%%%%%%%%%%%%%%%%%%%%%
%%%%%%%%%%%%%%%%%%   APPENDIX.tex   %%%%%%%%%%%%%%%%%
%%%%%%%%%%%%%%%%%%%%%%%%%%%%%%%%%%%%%%%%%%%%%%%%%%%%%

%%   \input{rearrange_appendix.tex}

\subsection{Rearrangements of Subsets of $\mathbb{N}$}

This subsection continues to explore how to preserve non-mechanistic behavior with operations based on 
permutations.  First, some definitions and results are developed about rearrangements of $\mathbb{N}$, 
induced by subsets of $\mathbb{N}$.   These results are useful for understanding how to preserve bi-immunity.  
In the next subsection, we show that the bi-immune rearrangements 
generate an uncountable subgroup of the infinite symmetric group on $\mathbb{N}$.

Let $k \in  \mathbb{N}$.  A permutation $\sigma_{(k)}:  \mathbb{N} \rightarrow \mathbb{N}$ is generated from the 
identity permutation $\sigma_{\emptyset} = (0, 1, 2, 3, \dots)$ 
by transposing the $k$th entry and $k+1$th entry of  $\sigma_{\emptyset}$.   Thus,   
$\sigma_{(k)} (x) = x$ when $x \notin \{k, k+1\}$ and $\sigma_{(k)} (k) = k+1$ and  $\sigma_{(k+1)} (k)$.   

This can be repeated on $\sigma_{(k)}$ where $\sigma_{(k, k+1)}$ is generated from transposing 
the $k+1$ and $k+2$ entries of  $\sigma_{(k)}$.    Thus, $\sigma_{(k, k+1)} (x) = x$ when $x \notin \{k, k+1, k+2\}$.  
Otherwise, $\sigma_{(k, k+1)} (k) = k+1$, $\sigma_{(k, k+1)} (k+1) = k+2$, and  $\sigma_{(k, k+1)} (k+2) = k$.   
This leads to the simple observation that  $\sigma_{(k+1)} \circ \sigma_{(k)} = \sigma_{(k, k+1)}$.  

Consider the segment $[m, n] = (m, m+1, \dots, n)$ where $m < n$.   
Starting with $\sigma_{\emptyset}$, apply  the aforementioned transposition step $n - m +1$ times.   
Thus,  $\sigma_{(m, m+1, \dots, n)} (x) = x$ when $x < m$ or $x > n+1$.  
Otherwise, $\sigma_{(m, m+1, \dots, n)} (k) = k+1$ when $m \le k \le n$ and   $\sigma_{(m, m+1, \dots, n)} (n+1) = m$.  
From the prior observation,  it is apparent that  
$\sigma_{(m, m+1, \dots, n)} = \sigma_{(n)} \dots  \circ   \sigma_{(m+1)} \circ \sigma_{(m)}$. 
As examples, $\sigma_{(0, 1)} = (1, 2, 0, 3, 4, 5, \dots)$ and  $\sigma_{(4, 5, 6)} = (0, 1, 2, 3, 5, 6, 7, 4, 8, 9, 10, \dots)$.

\smallskip

It will be helpful to represent inverses with the same construction.   First, $\sigma_{(k)}$ is its own inverse.   
$\sigma_{(k)} \circ \sigma_{(k)} (k) = \sigma_{(k)}(k+1) = k$ and  $\sigma_{(k)} \circ \sigma_{(k)} (k+1)$ $= \sigma_{(k)}(k) = k+1$.    
Also $\sigma_{(k)} (x) = x$ when $x \notin \{k, k+1\}$.  Thus, $\sigma_{(k)} \circ \sigma_{(k)} (x) = x$ when $x \notin \{k, k+1\}$.
Since function composition is associative, 
$(\sigma_{(x)} \circ \sigma_{(y)}) \circ (\sigma_{(y)} \circ \sigma_{(x)}) = \sigma_{(x)} \circ (\sigma_{(y)}  \circ \sigma_{(y)}) \circ \sigma_{(x)} = \sigma_{\emptyset}$.   
It was already verified that   $\sigma_{(k+1)} \circ \sigma_{(k)} = \sigma_{(k, k+1)}$.

Observe that   $\sigma_{(0, 1)} = (1, 2, 0, 3, 4, 5, \dots) \ne  (2, 0, 1, 3, 4, 5, \dots) = \sigma_{(1, 0)}$.   
It is helpful to know when $\sigma_{(x)}$ and $\sigma_{(y)}$ commute.    
Lemma \ref{commute_remark} helps explain why commutativity fails for more 
complicated permutations such as $\sigma_{(5, 7, 2, 11,  4, 729)}$.

\begin{lem}\label{commute_remark}

If $|x-y| > 1$, then $\sigma_{(y)} \circ \sigma_{(x)} =  \sigma_{(x)} \circ \sigma_{(y)}$.    

\end{lem}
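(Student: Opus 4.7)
The plan is to verify the equality pointwise on $\mathbb{N}$ by case analysis. The key structural observation is that $|x-y| > 1$ forces the two ``swap supports'' $\{x, x+1\}$ and $\{y, y+1\}$ to be disjoint: without loss of generality $y \ge x + 2$, so $y > x+1$ and $y+1 > x+1$, hence $\{x,x+1\} \cap \{y,y+1\} = \emptyset$. Once disjointness is in hand, commutativity of the two elementary transpositions is a routine check, but we should still write it out so as to match the specific construction given earlier in the subsection.

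First I would fix an arbitrary $n \in \mathbb{N}$ and split into three cases: (a) $n \notin \{x, x+1, y, y+1\}$; (b) $n \in \{x, x+1\}$; (c) $n \in \{y, y+1\}$. In case (a), the definition of $\sigma_{(k)}$ gives $\sigma_{(x)}(n) = n$ and $\sigma_{(y)}(n) = n$, so both $\sigma_{(y)} \circ \sigma_{(x)}(n)$ and $\sigma_{(x)} \circ \sigma_{(y)}(n)$ equal $n$. In case (b), say $n = x$: then $\sigma_{(x)}(n) = x+1$, and since $x+1 \notin \{y, y+1\}$ by disjointness, $\sigma_{(y)}(x+1) = x+1$. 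Going the other way, $\sigma_{(y)}(x) = x$ because $x \notin \{y, y+1\}$, and then $\sigma_{(x)}(x) = x+1$. Both sides give $x+1$. The subcase $n = x+1$ is symmetric, as are both subcases of (c).

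Since the two compositions agree on every $n \in \mathbb{N}$, we conclude $\sigma_{(y)} \circ \sigma_{(x)} = \sigma_{(x)} \circ \sigma_{(y)}$. I do not expect any real obstacle here; the only thing worth being careful about is making the disjointness deduction explicit (the hypothesis $|x-y| > 1$ is exactly what is needed, and it fails when $|x-y| = 1$, which is precisely the case illustrated by $\sigma_{(0,1)} \ne \sigma_{(1,0)}$ just before the lemma).
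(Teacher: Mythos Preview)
Your proposal is correct and follows essentially the same approach as the paper's proof: both assume without loss of generality that $x+1 < y$, observe that the supports $\{x,x+1\}$ and $\{y,y+1\}$ are disjoint, and then verify the equality pointwise by checking the case $n \notin \{x,x+1,y,y+1\}$ and the case $n=x$ explicitly, with the remaining cases declared symmetric. The only cosmetic difference is that you make the disjointness of the supports an explicit lemma-within-the-proof, whereas the paper leaves it implicit in the repeated phrase ``because $x+1 < y$.''
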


\begin{proof}     W.L.O.G., suppose $x+1 < y$.    From above, $\sigma_{(y)}$ is the identity map outside of $\{y, y+1\}$ and 
$\sigma_{(x)}$ is the identity map outside of $\{x, x+1\}$.   
Thus, $\sigma_{(y)} \circ \sigma_{(x)} (k) = k = \sigma_{(x)} \circ \sigma_{(y)} (k)$ when $k \notin \{x, x+1, y, y+1\}$. 
When $k = x$, $\sigma_{(y)} \circ \sigma_{(x)} (x) =  \sigma_{(y)}(x+1) = x+1$ because $x+1 < y$.    
Also, $\sigma_{(x)} \circ \sigma_{(y)} (x) = \sigma_{(x)}(x) = x+1$ because $x+1 < y$.
Similarly, $\sigma_{(y)} \circ \sigma_{(x)} (x+1)  = \sigma_{(y)}(x) = x = \sigma_{(y)} (x+1) =  \sigma_{(x)}  \circ \sigma_{(y)} (x+1)$ because $x+1 < y$.  The remaining verifications hold for $y$ and $y+1$ because $x+1 < y$.
\end{proof}

When $x < y$, the  previous proof and   $\sigma_{(k, k+1)} =$ $\sigma_{(k+1)} \circ \sigma_{(k)}$ 
together imply that   $\sigma_{(y)} \circ \sigma_{(x)}=  \sigma_{(x, y)}$.   Although sequences with repeats won't be considered here, 
it is helpful to notice that $\sigma_{(x)} \circ \sigma_{(x)}=  \sigma_{(x, x)} = \sigma_{\emptyset}$.    
In some later constructions, it will be useful to know when  $\sigma_{(x, y)} = \sigma_{(y)} \circ \sigma_{(x)}$, and similarly when 
$\sigma_{(a_0, a_1, \dots a_n)} =$ $\sigma_{(a_n)} \dots  \circ \sigma_{(a_1)} \circ \sigma_{(a_0)}.$ 

\begin{lem}\label{lem:increase_order_lemma}

If $x \le y$, then  $\sigma_{(x, y)} =  \sigma_{(y)} \circ \sigma_{(x)}$.   

\smallskip

\noindent  If $a_0 < a_1 < \dots < a_n$, then $\sigma_{(a_0, a_1, \dots a_n)} = \sigma_{(a_n)} \dots  \circ \sigma_{(a_1)} \circ \sigma_{(a_0)}$.

\end{lem}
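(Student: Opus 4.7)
The plan is to handle the lemma in two steps: first the two-element base case ($n=1$), and then bootstrap to the general claim by induction on the sequence length $n$.

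For the base case $\sigma_{(x,y)} = \sigma_{(y)} \circ \sigma_{(x)}$ with $x \le y$, I would split into three subcases. When $x = y$, the remark that $\sigma_{(x,x)} = \sigma_{\emptyset}$ combined with the previously verified involution $\sigma_{(x)} \circ \sigma_{(x)} = \sigma_{\emptyset}$ closes the case immediately. When $y = x+1$, the identity is exactly the ``simple observation'' $\sigma_{(k+1)} \circ \sigma_{(k)} = \sigma_{(k,k+1)}$ stated when $\sigma_{(k,k+1)}$ was introduced. When $y > x+1$, Lemma \ref{commute_remark} applies, so $\sigma_{(y)}$ and $\sigma_{(x)}$ commute, and their supports $\{x, x+1\}$ and $\{y, y+1\}$ are disjoint; I would then match $\sigma_{(y)} \circ \sigma_{(x)}$ against the sequence-based construction of $\sigma_{(x,y)}$ point-wise on the four interesting positions $x, x+1, y, y+1$ (and trivially on the identity region) to confirm equality.

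For the inductive step, suppose the factorization holds for strictly increasing sequences of length $n$, and take $a_0 < a_1 < \cdots < a_n$. The recursive definition presents $\sigma_{(a_0, a_1, \dots, a_n)}$ as obtained from $\sigma_{(a_0, a_1, \dots, a_{n-1})}$ by one further transposition step keyed on $a_n$. Rewriting this step as a composition involving $\sigma_{(a_n)}$ and invoking the induction hypothesis on the length-$n$ prefix immediately yields the claimed factorization $\sigma_{(a_n)} \circ \sigma_{(a_{n-1})} \circ \cdots \circ \sigma_{(a_0)}$. Note that the strict ordering hypothesis ensures $|a_n - a_i| > 1$ for every $i < n-1$, so Lemma \ref{commute_remark} is available for any auxiliary rearrangements of factors that might be needed, while the interaction between $\sigma_{(a_n)}$ and $\sigma_{(a_{n-1})}$ (where they might be adjacent and hence non-commuting) is covered by the $y = x+1$ subcase of the base step.

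The main obstacle is pinning down exactly which side of the composition operator the new $\sigma_{(a_n)}$ factor lands on when the recursive transposition step is carried out; the informal phrase ``transpose the $a_n$-th and $(a_n+1)$-th entries of $\sigma_{(a_0, \dots, a_{n-1})}$'' must be matched to a specific algebraic composition. I would settle this uniformly by proving, alongside the $n=1$ case, a short auxiliary identity that converts the informal transposition-on-entries operation to composition with $\sigma_{(a_n)}$ on the designated side; with that identity in hand, both the base case and the inductive step reduce to routine algebra, and the strict ordering hypothesis primarily guarantees that the resulting factorization appears in the canonical order displayed in the statement.
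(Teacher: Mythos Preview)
Your proposal is correct and follows essentially the same route as the paper: the base case is handled by the discussion preceding the lemma (the $x=y$, $y=x+1$, and $|x-y|>1$ subcases you list are exactly what the paper means by ``just verified''), and the inductive step is the observation that the recursive swap defining $\sigma_{(a_0,\dots,a_k)}$ from $\sigma_{(a_0,\dots,a_{k-1})}$ coincides with left-composition by $\sigma_{(a_k)}$, using the strict ordering. Your closing remarks about Lemma~\ref{commute_remark} and auxiliary rearrangements of factors are not actually needed for the induction as the paper runs it, but your identification of the ``which side'' issue as the crux is exactly right and is what the paper's phrase ``these two properties imply'' is covering.
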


\begin{proof}   
If $x \le y$, then $\sigma_{(x, y)} =  \sigma_{(y)} \circ \sigma_{(x)}$ was just verified.     
This covers the base case for 
$\sigma_{(a_0, a_1)} =  \sigma_{(a_1)} \circ \sigma_{(a_0)}$.   
By induction, suppose  $\sigma_{(a_0, a_1, \dots a_{k-1})} =$ 
$\sigma_{(a_{k-1})} \dots  \circ \sigma_{(a_1)} \circ \sigma_{(a_0)}.$  Observe that 
$\sigma_{(a_{k})} \circ  \sigma_{(a_{k-1})}   \dots  \circ \sigma_{(a_1)} \circ \sigma_{(a_0)}$ is generated 
by swapping the $a_k$ entry and the $a_k + 1$ entry in $\sigma_{(a_{k-1})} \dots  \circ \sigma_{(a_1)} \circ \sigma_{(a_0)}.$  
Furthermore,  $a_0 < a_1 < \dots < a_{k-1} < a_k$.   
These two properties imply that $\sigma_{(a_{k})}  \circ \sigma_{(a_0, a_1, \dots a_{k-1})}$  
$=  \sigma_{(a_0, a_1, \dots a_k)}.$  Lastly, the induction hypothesis implies that $\sigma_{(a_0, a_1, \dots a_k)} =$  
$\sigma_{(a_k)} \circ \sigma_{(a_{k-1})}  \dots  \circ \sigma_{(a_1)} \circ \sigma_{(a_0)}.$        
\end{proof}

In general,  $\sigma_{(y)} \circ \sigma_{(x)} \ne  \sigma_{(x, y)}$ when $x > y$.    When does equality hold?
Consider the case $x > y + 1$.   Similar to the proof in lemma \ref{commute_remark},  
during the construction of  $\sigma_{(x, y)}$, the first step swaps the $x$ entry and $x+1$ entry.   
When the second step swaps the $y$ and $y+1$ entry,  this swap doesn't move the  $x$ and $x+1$ entries because $x > y + 1$.    
Thus, if $x > y + 1$, then $\sigma_{(y)} \circ \sigma_{(x)} =  \sigma_{(x, y)}$

\begin{table}[h]
 
\noindent  \caption{ \hskip 0.3pc $\sigma_{(k)} \circ \sigma_{(k+1)}$  \hskip 0.3pc  and  \hskip 0.3pc $\sigma_{(k+1, k)}$  } 
\label{tab:sigma_k+1_k}

\smallskip 

 \begin{tabular}{ p{1.5cm}p{2cm}p{1.75cm}p{2.75cm}p{1.75cm}}
   \hline
       $x$  & $\sigma_{(k+1)}(x)$ &  $\sigma_{(k)}(x)$ & $\sigma_{(k)} \circ \sigma_{(k+1)}(x)$  &  $\sigma_{(k+1, k)}(x)$        \\
\hline     
      $k$        & $k$      & $k+1$    & $k+1$  &  $k+2$   \\        
      $k+1$    & $k+2$  & $k$       &   $k+2$  &  $k$    \\
       $k+2$   &  $k+1$ & $k+2$       & $k$  &  $k+1$    \\
       \end{tabular}

 \end{table}

The remaining case is $\sigma_{(k)} \circ \sigma_{(k+1)}$. Both $\sigma_{(k)} \circ \sigma_{(k+1)}$ and $\sigma_{(k+1, k)}$  
are the identity map outside of $\{k, k+1, k+2\}$.  
However, $\sigma_{(k)} \circ \sigma_{(k+1)} \ne   \sigma_{(k+1, k)}$ on $\{k, k+1, k+2\}$, as 
shown in   table \ref{tab:sigma_k+1_k}.

%%   $\sigma_{(k)} \circ \sigma_{(k+1)}(k) = k+1$.  Also,  $\sigma_{(k+1, k)} = (0, 1, \dots, k-1, k+2,  k, k+1, \dots)$.    Thus,  $\sigma_{(k+1, k)} (k) =  k+2$.    

\medskip 

A sequence of functions $f_n : \mathbb{N} \rightarrow \mathbb{N}$ {\it pointwise converges} 
to a function $f : \mathbb{N} \rightarrow \mathbb{N}$ if for each $m \in \mathbb{N}$, there 
exists $N_m$ such that for all $n \ge N_m$, $f_n(m) = f(m)$.   For this particular $m$, we write 
${\underset{n \to \infty} \lim} f_n(m)  = f$.  When a sequence of functions $\{f_n\}$ pointwise 
converges, we write ${\underset{n \to \infty} \lim} f_n  = f$.

\begin{defn}\label{defn:seq_rearrange}  \hskip 1pc  {\it Sequence Rearrangement}

\noindent  Let $A = (a_0, a_1, \dots, a_k \dots)$ 
be a sequence of elements from $\mathbb{N}$,  where there are no repeats i.e., $j \ne k$ implies that $a_j \ne a_k$.   
To construct $\sigma_{A}:  \mathbb{N} \rightarrow \mathbb{N}$, start with $\sigma_{\emptyset} = (0, 1, 2, 3, \dots)$ where   
$\sigma_{\emptyset}$ is the identity permutation.   Similar to the above,  $\sigma_{A}$ is constructed 
iteratively using each element of $A$ to generate a transposition.
For $a_0$ swap the $a_0$ entry and the $a_0 + 1$ entry of  $\sigma_{\emptyset}$.   Thus, 
$\sigma_{(a_0)}(n) = n$ when $n \notin \{a_0, a_0+1\}$.  Also,  $\sigma_{(a_0)} (a_0) = a_0 + 1$ and  $\sigma_{ (a_0)}(a_0 + 1) = a_0$.  
Inductively, suppose $\sigma_{(a_0, a_1, \dots, a_{k-1})} = (b_0, b_1, b_2, \dots, b_k, \dots)$.   Then to construct $\sigma_{(a_0, a_1, \dots, a_{k-1}, a_k)}$  from  $\sigma_{(a_0, a_1, \dots, a_{k-1})}$ swap $b_{a_k}$ and $b_{a_k+1}$.    

\smallskip 

Define $\sigma_{A} = {\underset{n \to \infty} \lim} \sigma_{(a_0, a_1, \dots, a_{n-1}, a_{n})}$.  We verify that 
$f_n =  \sigma_{(a_0, a_1, \dots, a_{n-1}, a_{n})}$ pointwise converges so that $\sigma_{A}$ is well-defined.    
Since the elements of $A$ are distinct, this implies that for any $m$, there exists an $N$ such that all 
elements $a_j  \in A$ and $a_j \le m$ implies $j \le N$.  In other words, the $a_j \le m$ have already appeared.   
Thus, there will be no more swaps for entries  $\le m$, so 
${\underset{n \to \infty} \lim} \sigma_{(a_0, a_1, \dots, a_{n-1}, a_{n})} (m)$ exists for each $m$.

\end{defn}

\begin{lem}\label{commute_order_lemma}

Let sequence $A = (a_0, a_1, \dots, a_k \dots)$ with no repeats.  
Construct $(b_0, b_1, \dots b_n)$, such that $b_k < b_{k+1}$, as a  
rearrangement of $(a_0, a_1, \dots a_n)$.  That is, as sets $\{b_0, b_1, \dots b_n\} = \{a_0, a_1, \dots, a_n\}.$ 
If $|a_j - a_k| > 1$ for each pair $j \ne k$, then 
$\sigma_{(a_0, a_1, \dots a_n)} = \sigma_{(a_n)} \dots  \circ \sigma_{(a_1)} \circ \sigma_{(a_0)}=$ $\sigma_{(b_n)} \dots  \circ \sigma_{(b_1)} \circ \sigma_{(b_0)}$ 
$= \sigma_{(b_0, b_1, \dots b_n)}$ for each $n$.  
\end{lem}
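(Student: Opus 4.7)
The plan is to prove the chain of three equalities separately, working right-to-left. The rightmost identity $\sigma_{(b_n)} \circ \cdots \circ \sigma_{(b_0)} = \sigma_{(b_0, b_1, \dots, b_n)}$ is immediate from Lemma~\ref{lem:increase_order_lemma}, since $b_0 < b_1 < \cdots < b_n$ by construction.

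For the middle equality $\sigma_{(a_n)} \circ \cdots \circ \sigma_{(a_0)} = \sigma_{(b_n)} \circ \cdots \circ \sigma_{(b_0)}$, the hypothesis $|a_j - a_k| > 1$ for every $j \ne k$ together with Lemma~\ref{commute_remark} says that all of the factors $\sigma_{(a_i)}$ pairwise commute. Since $(b_0, \dots, b_n)$ is merely a permutation of $(a_0, \dots, a_n)$, a standard bubble-sort argument --- swapping adjacent factors one at a time, each swap being a free move by Lemma~\ref{commute_remark} --- reorders one composition into the other.

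The leftmost equality $\sigma_{(a_0, a_1, \dots, a_n)} = \sigma_{(a_n)} \circ \cdots \circ \sigma_{(a_0)}$ is the substantive step; I would handle it by induction on $n$, closely modeled on the proof of Lemma~\ref{lem:increase_order_lemma}. The base case $n=0$ is trivial. For the inductive step, Definition~\ref{defn:seq_rearrange} specifies that $\sigma_{(a_0, \dots, a_n)}$ is built from $\sigma_{(a_0, \dots, a_{n-1})}$ by swapping the entries at positions $a_n$ and $a_n+1$. The key observation is that for every $i < n$, the distance condition $|a_i - a_n| > 1$ forces $a_i \notin \{a_n - 1, a_n, a_n + 1\}$, so $\sigma_{(a_i)}$ fixes both positions $a_n$ and $a_n+1$. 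Consequently the composition $\sigma_{(a_{n-1})} \circ \cdots \circ \sigma_{(a_0)}$ fixes these two positions as well, and exactly as in Lemma~\ref{lem:increase_order_lemma} this lets the position swap be identified with the composition of $\sigma_{(a_n)}$ onto $\sigma_{(a_{n-1})} \circ \cdots \circ \sigma_{(a_0)}$, closing the induction.

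The main obstacle is this last step: bridging the iterative ``swap entries at positions $a_n, a_n+1$'' operation in Definition~\ref{defn:seq_rearrange} with the algebraic composition by $\sigma_{(a_n)}$. Once one notes that positions $a_n$ and $a_n+1$ are fixed by every earlier $\sigma_{(a_i)}$, the reasoning in Lemma~\ref{lem:increase_order_lemma} transfers verbatim, with the distance hypothesis taking over the role that monotonicity played there. The bubble-sort reduction is routine but still needs the explicit remark that every transposition of adjacent letters in the index sequence is permissible by Lemma~\ref{commute_remark}.
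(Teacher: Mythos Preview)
Your proposal is correct and follows essentially the same three-part decomposition as the paper: Lemma~\ref{lem:increase_order_lemma} for the rightmost equality, Lemma~\ref{commute_remark} (with the bubble-sort reduction) for the middle, and Definition~\ref{defn:seq_rearrange} together with the distance hypothesis for the leftmost. The paper is terser on that leftmost equality---it simply asserts it from the definition and the hypothesis $|a_j-a_k|>1$---whereas you spell out the induction explicitly, but the underlying argument is the same.
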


\begin{proof}  
The order of $(a_0, a_1, \dots, a_n)$ can be rearranged so that the transpositions  $\sigma_{(b_k)}$ 
are applied in increasing order $b_k <  b_{k+1}$.  Lemma \ref{commute_remark}   implies that  
$\sigma_{(a_n)}  \dots \sigma_{(a_1)} \circ \sigma_{(a_0)} = \sigma_{(b_n)}  \dots \sigma_{(b_1)} \circ \sigma_{(b_0)}$.   
Lemma \ref{lem:increase_order_lemma} implies $\sigma_{(b_0, b_1, \dots b_n)}  = \sigma_{(b_n)}  \dots \sigma_{(b_1)} \circ \sigma_{(b_0)}$.  
Definition \ref{defn:seq_rearrange} and $|a_j - a_k| > 1$ for each pair $j \ne k$ implies that  
$\sigma_{(a_0, a_1, \dots a_n)} = \sigma_{(a_n)}  \dots \sigma_{(a_1)} \circ \sigma_{(a_0)}$.     
\end{proof}

\begin{defn}\label{defn:set_rearrange}  \hskip 1pc  {\it Set Rearrangement}  

\noindent Instead of starting with a sequence,  a set $A \subset {\mathbb{N}}$  is ordered into a 
sequence $(a_0, a_1, \dots, a_k \dots)$  according to $a_k < a_{k+1}$.  
This means $a_0$ is the least element of  $A$; $a_1$ is the least element of $A - \{a_0\}$; 
inductively, $a_{k+1}$ is the least element of $A - \{a_0, a_1, \dots, a_k\}$.    Define  
${\overset{n} {\underset{k = 0} \prod}} \sigma_{a_k} =  \sigma_{(a_n)} \circ  \dots \circ  \sigma_{(a_1)}  \circ  \sigma_{(a_0)}$.  
Define $\sigma_{A} = {\underset {n \to \infty} \lim}$ ${\overset{n} {\underset{k = 0} \prod}} \sigma_{a_k}$.  
For the same reason as in definition \ref{defn:seq_rearrange}, this limit exists and 
hence $\sigma_{A}$ is well-defined.   If $A$ is an infinite set, 
lemma \ref{lem:increase_order_lemma} implies   
$\sigma_{A} = {\underset{n \to \infty} \lim} \sigma_{(a_0, a_1, \dots, a_{n-1}, a_{n})}$.       
\end{defn}  

For example, $\sigma_{\mathbb{E}} = (1, 0, 3, 2, 5, 4, 7, 6,  \dots)$, and 
$\sigma_{\mathbb{O}} = (0, 2, 1, 4, 3, 6, 5, 8, 7, \dots)$. 

\smallskip 

\begin{lem}\label{lem:sigma_map_1_to_1}
For any $A \subset \mathbb{N}$, 
the map $A \longmapsto \sigma_{A}$ is one-to-one.
%% \mapsto
\end{lem}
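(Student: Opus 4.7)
The plan is to locate the smallest index where two distinct subsets $A$ and $B$ of $\mathbb{N}$ are forced to push $\sigma_{A}$ and $\sigma_{B}$ apart, and then to read off an explicit value at which they disagree. Suppose $A \neq B$ and let $m = \min(A \oplus B)$; without loss of generality $m \in A$ and $m \notin B$. My target is to show that $\sigma_{A}(m) \neq \sigma_{B}(m)$, which suffices.

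The first step exploits that $A \cap \{0, 1, \dots, m-1\} = B \cap \{0, 1, \dots, m-1\}$, so by definition~\ref{defn:set_rearrange} the two constructions share a common initial product of transpositions. Let $\pi$ denote the partial permutation obtained after applying this common prefix. Each factor $\sigma_{(p)}$ with $p \leq m-1$ only touches the entries at positions $p$ and $p+1$, both of which lie in $\{0, 1, \dots, m\}$, so $\pi$ restricts to a bijection of $\{0, 1, \dots, m\}$ onto itself and is the identity on $\{m+1, m+2, \dots\}$. In particular $\pi(m) \leq m$.

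Next I track what happens in each construction after the common prefix. For $\sigma_{A}$, the next element processed is $m$ itself, whose swap at position $m$ sends the current value $\pi(m)$ up to position $m+1$ and brings the value $m+1$ (still there because $\pi$ is the identity above $m$) down to position $m$. Every remaining element of $A$ is strictly greater than $m$, and any later $\sigma_{(p)}$ with $p \geq m+1$ only alters entries at positions $\geq m+1$, so position $m$ is untouched for the rest of the construction; thus $\sigma_{A}(m) = m+1$. For $\sigma_{B}$, since $m \notin B$ every remaining element of $B$ is at least $m+1$, so by the same locality the value at position $m$ is never disturbed after the common prefix, giving $\sigma_{B}(m) = \pi(m) \leq m$.

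Combining the two outputs yields $\sigma_{A}(m) = m+1$ and $\sigma_{B}(m) \leq m$, so $\sigma_{A} \neq \sigma_{B}$ and the map $A \longmapsto \sigma_{A}$ is injective. The only subtle point in the argument is the claim that no swap at a position $\geq m+1$ ever perturbs the value at position $m$; this follows at once from the fact that $\sigma_{(p)}$ transposes positions $p$ and $p+1$ alone, combined with the increasing order in which definition~\ref{defn:set_rearrange} processes the elements of $A$ and of $B$. That locality is what lets the single diagnostic value $\sigma_{A}(m) - \sigma_{B}(m)$ be computed from the prefix alone and drives the whole proof.
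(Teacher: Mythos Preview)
Your proof is correct and follows essentially the same approach as the paper's: both locate the least element $m$ in the symmetric difference (the paper indexes it as $a_m$ in the ordered sequence), show that $\sigma_A(m)=m+1$ while $\sigma_B(m)\le m$, and conclude. Your use of the common prefix $\pi$ and the locality of $\sigma_{(p)}$ handles uniformly what the paper splits into the cases $m=1$ versus $m>1$ and $a_m-a_{m-1}=1$ versus $a_m-a_{m-1}>1$, so your write-up is slightly cleaner, but the underlying argument is the same.
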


\begin{proof}
Suppose $A, B \subset \mathbb{N}$ and $A \ne B$.  
As defined in \ref{defn:set_rearrange}, 
let $A = \{a_1, a_2, \dots \}$, where $a_k < a_{k+1}$ 
and let  $B = \{b_1, b_2, \dots \}$, where $b_k < b_{k+1}$. 
Let $m$ be the smallest index such that $a_m \ne b_m$.  
W.L.O.G., suppose $a_m < b_m$.

Claim:  $\sigma_{A}(a_m) \ne \sigma_B(a_m)$.   
If $m = 1$, then $\sigma_{A}(a_m) = a_m + 1$, 
and $\sigma_{B}(a_m) = a_m$.  
For the other case $m > 1$, $a_{m-1} = b_{m-1}$ and $b_m - b_{m-1} > 1$. 
The three conditions  $a_{m-1} = b_{m-1}$, $b_m - b_{m-1} > 1$ 
and $a_m < b_m$ together imply that $\sigma_B(a_m) \le a_m$. 
In the next paragraph, we verify that $\sigma_{A}(a_m) = a_m + 1$ 
and this completes the proof.

For the case $a_m - a_{m-1} = 1$, when $\sigma_{(a_{m-1})}$ is applied, $a_m$ and $a_{m-1}$ are swapped so 
that $\sigma_{A}(a_m - 1) = a_m$.   After this swap, the values $a_{m-1}$ and $a_{m} + 1$ are swapped 
so that at the $a_m$ index, $\sigma_{A}(a_m) = a_{m} + 1$.  
For the case  $a_m - a_{m-1} > 1$, no swap occurs between indices $a_m$ and $a_{m} - 1$.  
At the $a_m$ index, the values $a_{m}$ and $a_{m} + 1$ are swapped so that 
$\sigma_{A}(a_m) = a_{m} + 1$.  
\end{proof}

$\sigma_{A}: \mathbb{N} \rightarrow \mathbb{N}$ is one-to-one, as $\sigma_{A}$ 
is a composition of transpositions according to definition \ref{defn:set_rearrange}.  
$\sigma_{A}$ is not always onto.  $\sigma_{\mathbb{N}} = (1, 2, 3, 4, 5, 6, \dots)$;
that is,  $\sigma_{\mathbb{N}}(n) = n + 1$.   $A$ is called a {\it tail set} 
if there exists an $N$ such that $m \ge N$ implies $m \in A$.  
During the construction of $\sigma_{A}$ per definition \ref{defn:set_rearrange}, 
if $a_m$ is not in $A$, then no element less than $a_m$ swaps beyond index $a_m$.  
This observation implies remark \ref{rem:tail_set_not_onto}. 

\smallskip 

\begin{rem}\label{rem:tail_set_not_onto}

\noindent $A$ is a tail set if and only if  
$\sigma_{A}: \mathbb{N} \rightarrow \mathbb{N}$ is not onto.   
\end{rem}

\smallskip 

In \cite{post}, Post states that the complement of any finite subset (i.e., tail set) of $\mathbb{N}$ is Turing computable.  
Example \ref{ex:TM_tail_set} describes a Turing machine that computes a tail set $A$; this machine is 
provided for the group theorist who may not be as familiar with Turing machines as a computability theorist. 

\smallskip 

\begin{example}\label{ex:TM_tail_set}  \hskip 1pc  {\it A Turing Machine that computes a Tail Set}

\noindent  Let $n$ consecutive $1$'s on the tape, followed by a blank, correspond to the non-negative integer $n$.   
The machine starts in state $q_0$.   If the machine reads a 1, when in state $q_k$ when $k < M$, then 
it moves one tape square to the right and moves to state $q_{k+1}$.     
If the machine reads a blank in state $q_k$, then if $k \in A$, then it writes a $1$ in this tape square and halts.  
Otherwise, $k \notin A$ and the machine writes a $0$ in the tape square and halts.     
If the machine reaches state $q_M$, then it stays in state $q_M$ while still reading a 1 and moves one tape square to the right.   
If it reads a blank while in state $q_M$, then it writes a 1 and halts.  
\end{example}

\newpage

\subsection{Turing Incomputable Rearrangements}

A binary sequence is {\it Turing incomputable} if no Turing machine 
can exactly reproduce this infinite sequence of 0’s and 1’s.  
In other words, let $T$ be the subset of $\{0, 1\}^{\mathbb{N}}$ 
that are Turing computable sequences;  then  
$\{0, 1\}^{\mathbb{N}} - T$ are the Turing incomputable binary sequences.  
If $A$ is a Turing incomputable set, then example \ref{ex:TM_tail_set} 
implies $A$ cannot be a tail set.  

\begin{rem}

\noindent  If $A$ is Turing incomputable, then 
$\sigma_{A}:  \mathbb{N} \rightarrow \mathbb{N}$ is a permutation.   

\end{rem}

Bi-immunity is a stronger form of Turing incomputability because a Turing machine 
cannot even produce a subsequence of a bi-immune sequence. Whenever $A$ 
is a bi-immune set, $\sigma_{A}$ is called a {\it bi-immune rearrangement}.

\begin{cor}

\noindent  If $A$ is a bi-immune set, then $\sigma_{A}:  \mathbb{N} \rightarrow \mathbb{N}$ is a permutation.   

\end{cor}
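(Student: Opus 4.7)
The plan is to reduce the corollary directly to the preceding remark by showing that bi-immunity (indeed, even just immunity) implies Turing incomputability. The preceding remark already gives us the desired conclusion from Turing incomputability, so the only work is to verify the implication

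\[
A \text{ bi-immune} \implies A \text{ Turing incomputable.}
\]

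First I would observe that if $A$ is immune, then $A$ cannot itself be computably enumerable. For suppose $A$ were c.e.; by condition (i) of Definition \ref{defn_bi_immune}, $A$ is infinite. Taking $R = A$ in condition (ii) then forces $A \cap \overline{A} \ne \emptyset$, which is absurd. Hence no immune set is c.e. Since every Turing computable subset of $\mathbb{N}$ is c.e., this immediately gives that no immune set is Turing computable.

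Second, since $A$ is bi-immune it is in particular immune, so by the previous step $A$ is Turing incomputable. Apply the remark immediately preceding the corollary to conclude that $\sigma_A : \mathbb{N} \rightarrow \mathbb{N}$ is a permutation.

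There is essentially no obstacle to overcome: the corollary is a one-line consequence of the previous remark once one notes that the defining clause of immunity, applied with $R = A$, rules out $A$ itself being c.e. The only mild subtlety is remembering that the preceding remark was stated for Turing incomputable sets rather than for non-c.e. sets, so one must pass through the standard observation that computable $\Rightarrow$ c.e. before invoking it.
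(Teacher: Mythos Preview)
Your proposal is correct and follows essentially the same route as the paper: the corollary is stated without proof, immediately after the remark on Turing incomputable sets and the sentence ``Bi-immunity is a stronger form of Turing incomputability\dots''. Your use of $R=A$ in condition~(ii) to show that an immune set cannot be c.e.\ is a clean way to make that implication explicit, but the overall structure---bi-immune $\Rightarrow$ incomputable $\Rightarrow$ apply the preceding remark---is exactly what the paper intends.
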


Let Sym$({\mathbb{N}})$ be the infinite symmetric group of all permutations on $\mathbb{N}$.  
Set $\mathfrak{I} = \{ \sigma_{A}: A$ \hskip 0.2pc  \verb|is a| \hskip 0.2pc 
\verb|Turing| \verb|incomputable set|$\}$.   
Set $S_{\mathfrak{I}} = \{ H:$  $H$  \hskip 0.2pc \verb|is a subgroup of| Sym$({\mathbb{N}})$  
\hskip 0.2pc  \verb|and|  \hskip 0.2pc   $H \supseteq \mathfrak{I} \}$. 
The {\it incomputable rearrangements} are the elements of $\mathfrak{I}$.  
$\mathfrak{I}$ generates a permutation group 
%%  $G_{\mathfrak{I}}$. 

\begin{equation}
G_{\mathfrak{I}} = {\underset{H \in S_{\mathfrak{I}} } {\cap} } H 
\end{equation}

Set $\mathfrak{B} = \{ \sigma_{A}: A$ 
\hskip 0.2pc \verb|is a bi-immune set|$\}$. 
Set $S_{\mathfrak{B}} = \{ H  :  H$  \hskip 0.2pc   \verb|is a|  \hskip 0.2pc 
\verb|subgroup of| Sym$({\mathbb{N}})$  \hskip 0.2pc  \verb|and| \hskip 0.2pc 
$H \supseteq \mathfrak{B}  \}$. 
The bi-immune rearrangements $\mathfrak{B}$ generate a permutation group, 
called the {\it bi-immune symmetric group}:      
%%  $G_{\mathfrak{B}}$. Namely, 

\begin{equation}
G_{\mathfrak{B}} = {\underset{H \in S_{\mathfrak{B}} } {\cap} } H
\end{equation}

Let $\mathfrak{A}$ be a subset of $\mathfrak{B}$.  $\mathfrak{A}$ may contain a countable 
number of bi-immune rearrangements, or  uncountable number.  $\mathfrak{A}$ may contain a 
finite number of bi-immune rearrangements.   
Set $S_{\mathfrak{A}} = \{ H:$  \hskip 0.2pc $H$ 
\hskip 0.2pc  \verb|is a subgroup of| Sym$({\mathbb{N}})$ \hskip 0.2pc 
\verb|and| \hskip 0.2pc  $H \supseteq \mathfrak{A} \}$.   
For each $\mathfrak{A} \subset \mathfrak{B}$, define the permutation group 

\begin{equation}
G_{\mathfrak{A}} = {\underset{H \in S_{\mathfrak{A}} } {\cap} } H
\end{equation}

%%  For each $\mathfrak{A}$, $G_{\mathfrak{A}}$ is a subgroup of $G_{\mathfrak{B}}$.    
%%  Since ${\mathfrak{B}} \subset \mathfrak{I}$, $G_{\mathfrak{B}}$ is a subgroup of $G_{\mathfrak{I}}$.   
We pursue some questions about subgroups $G_{\mathfrak{A}}$ of the bi-immune symmetric group.
%%  subgroup $G_{\mathfrak{B}}$.  
For each  $\mathfrak{A}$,  what is the structure of $G_{\mathfrak{A}}$ as a subgroup of $G_{\mathfrak{B}}$? 
Can $G_{\mathfrak{A}} = G_{\mathfrak{B}}$ when $\mathfrak{A}$ is countably infinite or finite?
What is the structure  of $G_{\mathfrak{B}}$ as a subgroup of Sym$({\mathbb{N}})$?
What are $G_{\mathfrak{B}}$'s group theoretic properties?

Lemma \ref{lem:sigma_map_1_to_1} implies the bi-immune symmetric group is uncountable 
because the bi-immune sets are uncountable.  Since the number of distinct $\mathfrak{A}$ is uncountable, 
do the various $\mathfrak{A}$ generate an uncountable number of subgroups of the 
bi-immune symmetric group?  Is $G_{\mathfrak{I}}$ a proper subgroup of Sym$({\mathbb{N}})$? 
Is $G_{\mathfrak{B}}$ a proper subgroup of $G_{\mathfrak{I}}$? 
%%    What are the conjugacy classes of $G_{\mathfrak{B}}$ and $G_{\mathfrak{I}}$?
%%    $G_{\mathfrak{I}}$ inside of Sym$({\mathbb{N}})$ 

\newpage

Our plan is to first show for any $i$ that $\sigma_{(i)}$ lies in the bi-immune symmetric group.   
Recall that $\sigma_{(i)}$ permutes $i$ and $i+1$, 
and all other natural numbers are fixed points of $\sigma_{(i)}$. 
From $\sigma_{(i)}$ in $G_{\mathfrak{B}}$, we show that for any 
$i, j \in \mathbb{N}$, the transposition $(i$ $j)$ lies in  $G_{\mathfrak{B}}$.  
Since the transpositions $\{ (i$ \hskip 0.2pc  $j): 0 \le i < j < n  \}$  generate the finite 
symmetric group Sym$(\{0, 1, \dots, n-1\})$, we see that the finitary symmetric group 
is a subgroup of the bi-immune symmetric group.  %%$G_{\mathfrak{B}}$.  
We proceed with the details.

For each $r \in \mathbb{N}$ and $A \subset \mathbb{N}$, 
define $A_{>r} = \{ a \in A: a > r \}$ and $A_{\le r} = \{ a \in A: a \le r \}$.   
Since $A_{\le r}$ is finite, the following remark follows from lemma \ref{bi_finite_set}.

\begin{rem}\label{rem:A_le_r_bi_immune}
If $A$ is a bi-immune set, then $A_{>r} = A - A_{\le r}$ is bi-immune.  
\end{rem}

\medskip 

Remark \ref{rem:A_gt_r_union_i} is an immediate consequence of 
remark \ref{rem:A_le_r_bi_immune} and lemma \ref{bi_finite_set}.

\begin{rem}\label{rem:A_gt_r_union_i}
If $A$ is a bi-immune set,  then the set $A_{>r} \cup \{i\}$ is also bi-immune whenever $i < r$.
\end{rem}

\begin{lem}
For any $i$, permutation $\sigma_{(i)}$ lies in the bi-immune symmetric group.  %% subgroup $G_{\mathfrak{B}}$.
\end{lem}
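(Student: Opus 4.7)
The plan is to exhibit $\sigma_{(i)}$ as a product of two bi-immune rearrangements (and an inverse), using the remarks about modifying a bi-immune set on a finite piece. Start by fixing any bi-immune set $A \subset \mathbb{N}$ (such sets are known to exist). Set $B = A_{>i}$; by remark \ref{rem:A_le_r_bi_immune}, $B$ is bi-immune. Since $i \notin B$, remark \ref{rem:A_gt_r_union_i} (equivalently, lemma \ref{bi_finite_set}) gives that $B' = B \cup \{i\}$ is also bi-immune. Hence $\sigma_B$ and $\sigma_{B'}$ both belong to $\mathfrak{B} \subset G_{\mathfrak{B}}$.

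Next I would compare the two rearrangements using the strictly increasing enumeration. Writing $B = \{b_0 < b_1 < b_2 < \cdots\}$, every $b_k$ satisfies $b_k > i$, so the strictly increasing enumeration of $B'$ is $(i, b_0, b_1, b_2, \dots)$. By lemma \ref{lem:increase_order_lemma} applied to each finite initial segment,
\[
\sigma_{(i, b_0, b_1, \dots, b_n)} = \sigma_{(b_n)} \circ \cdots \circ \sigma_{(b_0)} \circ \sigma_{(i)}.
\]
Taking the pointwise limit in $n$ on both sides and invoking definition \ref{defn:set_rearrange}, the right-hand side becomes $\sigma_B \circ \sigma_{(i)}$, while the left-hand side is $\sigma_{B'}$. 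Therefore
\[
\sigma_{B'} = \sigma_B \circ \sigma_{(i)}, \qquad \text{so} \qquad \sigma_{(i)} = \sigma_B^{-1} \circ \sigma_{B'}.
\]
Since $G_{\mathfrak{B}}$ is a group containing both $\sigma_B$ and $\sigma_{B'}$, it contains $\sigma_{(i)}$.

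The only nontrivial point is justifying that the pointwise limit passes through the composition with $\sigma_{(i)}$ on the right. This is routine: for any fixed $m$, once $n$ is large enough the swaps $\sigma_{(b_k)}$ with $b_k$ near $m$ are exhausted (the $b_k$ are distinct and strictly increasing), so both $\sigma_B(\sigma_{(i)}(m))$ and $\sigma_{B'}(m)$ stabilize at the same value for all sufficiently large $n$. This is the same bookkeeping used to show well-definedness of $\sigma_B$ in definition \ref{defn:set_rearrange}, so there is no genuine obstacle — the main content of the argument is the bi-immunity of $B$ and $B'$, which is supplied by the two remarks, and the reordering identity from lemma \ref{lem:increase_order_lemma}.
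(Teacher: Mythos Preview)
Your argument is correct and follows essentially the same route as the paper: pick a bi-immune set, strip a finite initial segment to get a bi-immune $B$ with all elements above $i$, adjoin $\{i\}$ to get another bi-immune set $B'$, and read off $\sigma_{(i)}$ from the relation between $\sigma_B$ and $\sigma_{B'}$. The only cosmetic differences are that the paper takes $r>i$ (so that every element of $B$ exceeds $i+1$ and $\sigma_{(i)}$ commutes with $\sigma_B$, yielding $\sigma_{(i)}=\sigma_{B'}\circ\sigma_B^{-1}$), whereas you take $r=i$ and obtain $\sigma_{(i)}=\sigma_B^{-1}\circ\sigma_{B'}$ directly from lemma~\ref{lem:increase_order_lemma}; and you supply the explicit limit justification that the paper leaves implicit.
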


\begin{proof}
Fix $i \in \mathbb{N}$.   Let $A$ be a bi-immune set.  Choose $r > i$.  Since $A_{>r}$ is bi-immune, 
$\sigma_{A_{>r}}$ is a permutation in subgroup $G_{\mathfrak{B}}$.   Hence,  ${\sigma}^{-1}_{A_{>r}}$ 
is in $G_{\mathfrak{B}}$, and $\sigma_{A_{>r} \cup \{i\}}$ is in $G_{\mathfrak{B}}$. 
Lastly, $\sigma_{(i)} = \sigma_{A_{>r} \cup \{i\}}  \circ  {\sigma}^{-1}_{A_{>r}}$  is in $G_{\mathfrak{B}}$.
\end{proof}

The following example illustrates why for any $i < j$, 
transposition  $(i$ \hskip 0.2pc $j)$  is in  $G_{\mathfrak{B}}$.  
To show that transposition $(1$ \hskip 0.2pc $4)$ is in $G_{\mathfrak{B}}$, 
we start with identity permutation  $\sigma_{\emptyset} = (0, 1, 2, 3, 4, 5, \dots)$ 
and compose with the appropriate $\sigma_{(i)}$ to move the $4$ to the location of $1$.  
$\sigma_{(3)} \circ \sigma_{\emptyset} = (0, 1, 2, 4, 3, 5, \dots)$. 
$\sigma_{(2)} \circ \sigma_{(3)} \circ \sigma_{\emptyset} = (0, 1, 4, 2, 3, 5, \dots)$. 
$\sigma_{(1)} \circ \sigma_{(2)} \circ \sigma_{(3)} \circ \sigma_{\emptyset} = (0, 4, 1, 2, 3, 5, \dots)$.

Then we compose the appropriate $\sigma_{(i)}$ to move $1$ to the original location of $4$ in $\sigma_{\emptyset}$.  
$\sigma_{(2)} \circ \sigma_{(1)} \circ \sigma_{(2)} \circ \sigma_{(3)} \circ \sigma_{\emptyset} = (0, 4, 2, 1, 3, 5, \dots)$.
$\sigma_{(3)} \circ  \sigma_{(2)} \circ \sigma_{(1)} \circ \sigma_{(2)} \circ \sigma_{(3)} \circ \sigma_{\emptyset}$ 
$= (0, 4, 2, 3, 1, 5, \dots)$.

\begin{lem}\label{lem:transposition_in_G}
For any $i < j$, transposition $(i$ \hskip 0.2pc $j)$ lies in the bi-immune symmetric group.  
%% subgroup $G_{\mathfrak{B}}$.  
\end{lem}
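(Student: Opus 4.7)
The plan is to express the general transposition $(i \; j)$ as a finite composition of adjacent transpositions $\sigma_{(k)}$, each of which has just been shown to lie in $G_{\mathfrak{B}}$. Since $G_{\mathfrak{B}}$ is a group, closure under composition and inversion then delivers the conclusion immediately.

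I would proceed by induction on $d = j - i \geq 1$. The base case $d = 1$ is immediate, since $(i \; i+1)$ coincides with $\sigma_{(i)}$, and the preceding lemma places $\sigma_{(i)}$ in $G_{\mathfrak{B}}$. For the inductive step I would use the standard conjugation identity $(i \; j) = \sigma_{(j-1)} \circ (i \; j-1) \circ \sigma_{(j-1)}$, which holds because $\sigma_{(j-1)}$ is its own inverse and conjugation of the transposition $(i \; j-1)$ by the adjacent swap $(j-1 \; j)$ sends it to $(i \; j)$ whenever $i < j-1$. A brief case check on the distinguished inputs $i$, $j-1$, $j$, and everything else is enough to verify the identity. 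By the inductive hypothesis $(i \; j-1) \in G_{\mathfrak{B}}$, the previous lemma supplies $\sigma_{(j-1)} \in G_{\mathfrak{B}}$, and $G_{\mathfrak{B}}$ being a group forces the composition to lie in $G_{\mathfrak{B}}$.

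Unwinding the recursion produces the explicit palindromic product already foreshadowed by the paper's preceding computation for $(1 \; 4)$: up to the ordering convention for $\circ$, the transposition $(i \; j)$ equals
\[
\sigma_{(j-1)} \circ \sigma_{(j-2)} \circ \cdots \circ \sigma_{(i+1)} \circ \sigma_{(i)} \circ \sigma_{(i+1)} \circ \cdots \circ \sigma_{(j-2)} \circ \sigma_{(j-1)},
\]
a composition of $2(j-i)-1$ adjacent transpositions all drawn from $G_{\mathfrak{B}}$. Either the inductive or the direct formulation yields the result.

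There is no substantive obstacle. The hard analytic content, extracting each adjacent $\sigma_{(k)}$ from a pair of bi-immune rearrangements of the form $\sigma_{A_{>r} \cup \{i\}} \circ \sigma_{A_{>r}}^{-1}$, has already been carried out in the previous lemma. What remains is the classical elementary fact that adjacent transpositions generate every transposition in the finite symmetric group, transported verbatim into the bi-immune setting; the only minor bookkeeping is to match the left/right ordering of the $\sigma_{(k)}$ factors to the paper's composition convention before declaring the product equal to $(i \; j)$.
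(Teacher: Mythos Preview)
Your proposal is correct and follows essentially the same approach as the paper: the paper's proof is a single line displaying exactly the palindromic product $\sigma_{(j-1)} \circ \cdots \circ \sigma_{(i)} \circ \cdots \circ \sigma_{(j-1)}$ that you derive (with a trailing $\sigma_{\emptyset}$, which is harmless). Your inductive justification via the conjugation identity simply supplies the verification that the paper leaves implicit.
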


\begin{proof}
$(i$ \hskip 0.2pc $j) =  \sigma_{(j-1)} \circ \sigma_{(j-2)}$ 
                         \hskip 0.2pc  $\dots$ \hskip 0.2pc  $\sigma_{(i+1)} \circ \sigma_{(i)} \circ \sigma_{(i+1)}$ 
                         \hskip 0.2pc  $\dots$ \hskip 0.2pc  $\sigma_{(j-2)} \circ \sigma_{(j-1)} \circ \sigma_{\emptyset}$. 
\end{proof}

Following \cite{macpherson}, for any $\sigma \in$ Sym$(\mathbb{N})$, 
define the {\it support} of $\sigma$ as 
$supp(\sigma) = \{ n \in \mathbb{N}: \sigma(n) \ne n \}$.  
When the support of $\sigma$ is a finite set, we say $\sigma$ is  {\it finitary}. 
%% if there exists a natural number $N$ such that $\sigma(n) = n$  whenever $n \ge N$.  
The {\it finitary permutations} are 
FS$(\mathbb{N}) = \{\sigma \in$ Sym$(\mathbb{N}):$ \hskip 0.1pc  $\sigma $ \verb|is finitary|$\}$. 
FS$(\mathbb{N})$ is called the {\it finitary symmetric group} on $\mathbb{N}$.

\begin{lem}\label{lem:BSym}
The finitary symmetric group FS$(\mathbb{N})$ is a proper subgroup of 
the bi-immune symmetric group $G_{\mathfrak{B}}$.  
\end{lem}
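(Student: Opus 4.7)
The plan is to prove the lemma in two steps: inclusion followed by properness. Both are short once the previous lemmas are in hand.

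For the inclusion FS$(\mathbb{N}) \subseteq G_{\mathfrak{B}}$, I would first recall the standard fact from (finite) group theory that the symmetric group on any finite set $\{n_0, n_1, \dots, n_k\} \subset \mathbb{N}$ is generated by the two-cycles on that set. Given an arbitrary $\tau \in$ FS$(\mathbb{N})$, its support is finite by definition, so $\tau$ restricted to $supp(\tau)$ is a permutation of a finite subset of $\mathbb{N}$, hence can be written as a finite product $\tau = (i_1\ j_1)(i_2\ j_2)\cdots(i_m\ j_m)$ of transpositions. By Lemma~\ref{lem:transposition_in_G}, each factor $(i_\ell\ j_\ell)$ lies in $G_{\mathfrak{B}}$, and because $G_{\mathfrak{B}}$ is a group, the finite composition $\tau$ lies in $G_{\mathfrak{B}}$ as well.

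For properness, I would exhibit a single witness: any bi-immune rearrangement $\sigma_A$ with $A$ bi-immune. By definition $\sigma_A \in \mathfrak{B} \subseteq G_{\mathfrak{B}}$, so it suffices to check $\sigma_A \notin$ FS$(\mathbb{N})$, i.e., that $supp(\sigma_A)$ is infinite. Going back to Definition~\ref{defn:set_rearrange}, each $a \in A$ contributes a swap of the $a$ and $a+1$ entries at the stage where $a$ is processed, and no later stage undoes this swap (the elements of $A$ are distinct and processed in increasing order). Consequently $a \in supp(\sigma_A)$ for every $a \in A$. Since $A$ is bi-immune it is infinite, and therefore $supp(\sigma_A)$ is infinite, so $\sigma_A \notin$ FS$(\mathbb{N})$. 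This establishes the strict inclusion.

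The only mild subtlety is the second step: one has to be a little careful when arguing that each $a \in A$ really lies in $supp(\sigma_A)$, rather than waving at the example $\sigma_{\mathbb{E}}$. Rigorously, if $a \in A$ then at the stage $\sigma_{(a_0,\dots,a)}$ the value previously at position $a$ is moved to position $a+1$ and vice versa, and any subsequent swap $\sigma_{(a')}$ with $a' \in A$ and $a' > a$ acts on indices $\ge a+1 > a$ only after possibly moving through position $a$ once more — but since every element of $A$ is distinct and the construction only swaps an entry forward, once no element of $A$ equal to or adjacent to $a-1$ remains to be processed, position $a$ is frozen at a value different from $a$. This monotone stabilization is exactly the well-definedness argument already used in Definition~\ref{defn:set_rearrange}, so no new machinery is needed. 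I do not expect any real obstacle; the heart of the lemma has already been done in Lemma~\ref{lem:transposition_in_G}.
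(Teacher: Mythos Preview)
Your proof is correct and follows essentially the same route as the paper: reduce inclusion to Lemma~\ref{lem:transposition_in_G} via the fact that every finitary permutation is a finite product of transpositions, and obtain properness from the existence of a non-finitary element in $G_{\mathfrak{B}}$. The paper dispatches properness in one line (``Observe that $G_{\mathfrak{B}}$ contains permutations that are not finitary''), whereas you go further and verify that $A\subseteq supp(\sigma_A)$; your justification there is slightly muddled in wording but the underlying fact is right --- indeed $\sigma_A(a_m)=a_m+1$ for every $a_m\in A$, since the stage-$a_m$ swap places $a_m+1$ at position $a_m$ and all later stages act on positions $>a_m$.
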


\begin{proof}
Observe that $G_{\mathfrak{B}}$ contains permutations that are not finitary.  
The composition of two finitary permutations is finitary, so $\circ$ is closed in  FS$(\mathbb{N})$.
For a large enough $n$, any finitary permutation lies in a finite subgroup of 
FS$(\mathbb{N})$ isomorphic to  Sym$(\{0, 1, \dots, n-1\})$.  
Any finitary permutation chosen from Sym$(\{0, 1, \dots, n-1\})$  is a finite composition of 
transpositions selected from the set $\{ (i$ \hskip 0.2pc  $j): 0 \le i < j < n  \}$.
Lemma \ref{lem:transposition_in_G} completes the proof.
\end{proof}

\begin{defn}
A  $k$-tuple $(a_1, \dots a_k)$ has pairwise distinct elements 
if  $a_i \ne a_j$ whenever $i \ne j$.   A subgroup $H$ of Sym$(\mathbb{N})$ is $k${\it-transitive} 
if for any two $k$-tuples $(a_1, \dots a_k)$ and $(b_1, \dots b_k)$ each with pairwise 
distinct elements, there exists at least one permutation $\sigma$ in $H$ such that 
$\sigma(a_i) = b_i$ for $1 \le i \le k$.  
A subgroup $H$ is {\it highly transitive} if $H$ is $k$-transitive for all $k \in \mathbb{N}$.  
\end{defn}

\begin{rem}
%%  $G_{\mathfrak{B}}$ 
The bi-immune symmetric group is highly transitive.  
\end{rem}

\begin{proof}
%% FS$(\mathbb{N})$ 
The finitary symmetric group is highly transitive.
\end{proof}

%%%%%%%%%%%%%%%%%%%%%%%%%%%%%%%%%%%%%%%%%%%%%%%%%%%%%%%%%%%%%%%%%%%%%%%%%%%%%%%%%%%%%%%%%%
%%%%%%%  TOPOLOGY on infinite symmetric group on the natural numbers
%%%%%%%%%%%%%%%%%%%%%%%%%%%%%%%%%%%%%%%%%%%%%%%%%%%%%%%%%%%%%%%%%%%%%%%%%%%%%%%%%%%%%%%%%%

Following \cite{cameron}, the remainder of this section defines a metric $d$ 
on Sym$(\mathbb{N})$ compatible with the pointwise convergence topology.  
%% the bi-immune symmetric group is a closed set with respect to the pointwise convergence topology  iff  $G_{\mathfrak{B}} =$ Sym$(\mathbb{N})$.  
%% If $G_{\mathfrak{B}}$ is not a closed set, then the bi-immune symmetric group is a proper subgroup of Sym$(\mathbb{N})$.
For any $\sigma, \tau \in$ Sym$(\mathbb{N})$, define $\rho: $ 
Sym$(\mathbb{N}) \times$  Sym$(\mathbb{N}) \rightarrow \mathbb{R}$ for the  three cases:

\begin{itemize}

\item[(a)]{  If $\sigma = \tau$, set $\rho(\sigma, \tau) = 0$. }

\item[(b)]{  If $\sigma(0) \ne \tau(0)$, set $\rho(\sigma, \tau) = 1$. }

\item[(c)]{  If  $\sigma(0) = \tau(0)$ and $\sigma \ne \tau$, set $\rho(\sigma, \tau) = 2^{-j}$, 
             where $\sigma(i) = \tau(i)$ for all $i$ such that $0 \le i < j$. }

\end{itemize}

For any $\sigma, \tau \in$ Sym$(\mathbb{N})$, 
define $d: $ Sym$(\mathbb{N}) \times$  Sym$(\mathbb{N}) \rightarrow \mathbb{R}$ as
$d(\sigma, \tau) = \max \{\rho(\sigma, \tau),$ $\rho(\sigma^{-1}, \tau^{-1}) \}$.  
It is straightforward to verify that $\big{(}$Sym$(\mathbb{N}),$ $d\big{)}$ 
is a complete metric space.

\bigskip

\begin{rem}
The bi-immune symmetric group   $G_{\mathfrak{B}}$   is dense in   $\big{(}$Sym$(\mathbb{N}), d\big{)}$. 
\end{rem}

\begin{proof}
For any $\tau$ in Sym$(\mathbb{N})$, there exists a sequence of $\tau_n$ in FS$(\mathbb{N})$ such that $\tau_n(k) = \tau(k)$ 
and ${\tau_n}^{-1}(k) = \tau^{-1}(k)$ for all $k \le n$.  
\end{proof}

\medskip

\begin{rem}
$G_{\mathfrak{B}}$ is closed in $\big{(}$Sym$(\mathbb{N}), d\big{)}$ if and only if  
$G_{\mathfrak{B}}$   $=$   Sym$(\mathbb{N})$.
\end{rem}

\subsection{Bi-Immune Dense Orbits in $[0, 1)$}

$a = a_0 a_1 \dots a_n, \dots$ $\in$  $\{0, 1\}^{\mathbb{N}}$ 
is called a {\it tail sequence} if there exists $M$ such that $a_n = 1$ for all $n \ge M$.  
Based on the standard identification between $A \subset \mathbb{N}$ and the binary sequence $\{a_k\}$ 
where $a_k = 1$ if $k \in A$ and $a_k = 0$ if  $k \notin A$,  each tail sequence uniquely corresponds 
to a tail set and vice versa.  Set $\mathcal{T} = \{ a \in \{0, 1\}^{\mathbb{N}}:$ \hskip 0.2pc $a$ \hskip 0.2pc 
\verb|is a tail sequence|$\}$.

To simplify our notation, set $\mathcal{S} = \{0, 1\}^{\mathbb{N}} - \mathcal{T}$. 
There is a 1-to-1 correspondence between points in $[0, 1) \subset \mathbb{R}$ and  $\mathcal{S}$.
%%  This is evident as $\overline{1}$ is identified with $\overline{0}$, which is the identity in $S^1$. 
If $a = a_0 a_1 \dots a_n \dots$ is a tail sequence, let $N_0$ be the natural number such 
that $a_n = 1$ when $n \ge N_0$ and $a_{N_0-1} = 0$.  
In the standard topology on $[0, 1)$, $a = a_0 a_1 \dot, a_n \dots$ is the same point as
$b = b_0 b_1 \dots b_n \dots$ such that $b_i = a_i$ for all $i$ satisfying 
$0 \le i < N_0-1$, $b_{N_0-1} = 1$, and $b_n = 0$ for all $n \ge N_0$.

\newpage

\begin{lem}\label{lem:irrational_rotation_bi_immune}
Let $\beta$ be a Turing computable real number in $[0, 1)$.  
Define function $f: [0, 1) \rightarrow [0, 1)$, where 
$f(x) = \big{(}x + \beta \big{)} \mod 1$.   
If $a =a_0 a_1 \dots a_n \dots$ is a bi-immune sequence, 
then $f(a)$ is a bi-immune sequence.
\end{lem}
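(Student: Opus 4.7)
The plan is to proceed by contradiction, combining a carry-propagation analysis of binary addition with Lemma \ref{lem:recursive_xor_bi_immune}. I first verify that $c := f(a)$ has a well-defined representative in $\mathcal{S}$: if $c$ were a dyadic rational (so $c \in \mathcal{T}$), then $a = c - \beta \mod 1$ would be the difference of two Turing-computable reals and hence itself Turing computable, contradicting bi-immunity of $A$ (which implies Turing incomputability). Moreover, the case where an infinite c.e.\ set $B$ lies in $\overline{C}$ (with $C := \{k : c_k = 1\}$) reduces to the case $B \subseteq C$ via the identity $\overline{c} = (\overline{a} + \overline{\beta}) \mod 1$ on $\mathcal{S}$, using that $\overline{a}$ is bi-immune (direct from bi-immunity of $A$) and $\overline{\beta} = 1 - \beta$ is Turing computable. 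So it suffices to handle $B \subseteq C$.

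I would then dispatch the easy case of dyadic $\beta = p/2^M$: since $a \in \mathcal{S}$, the strict inequality $\sum_{j > k} a_j 2^{-j-1} < 2^{-k-1}$ holds for every $k$, so in the binary addition $a + \beta$ no carry propagates from positions $\ge M$ back past position $M-1$. Hence $c_k = a_k$ for all $k \ge M$, and $A \oplus C \subseteq \{0, 1, \dots, M-1\}$ is finite, so $C = A \oplus (A \oplus C)$ is bi-immune by Lemma \ref{bi_finite_set}. For general computable $\beta$, bit-by-bit analysis of the inverse $a = (c + \overline{\beta}) \mod 1$ gives $a_k = c_k \oplus \overline{\beta}_k \oplus \delta_k$, where the carry $\delta_k = 1$ iff the tail real $c^{(k+1)} := \sum_{j \ge 0} c_{k+1+j} 2^{-j-1}$ strictly exceeds the uniformly computable tail $\beta^{(k+1)}$; the inequality is always strict because $c^{(k+1)} = \beta^{(k+1)}$ would force $a^{(k+1)} = 0$ and hence make $a$ dyadic and computable. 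For $k \in B$ this yields $a_k = \beta_k \oplus \delta_k$. Using the c.e.\ enumeration $B_s$ of $B$, I would define the c.e.\ set $E$ of indices $k$ for which the partial lower bound $\sum_{j \in B_s,\, j > k} 2^{-(j-k)}$ on $c^{(k+1)}$ eventually exceeds $\beta^{(k+1)}$; on $B \cap E$ the value $\delta_k = 1$ is forced, so $a_k = \overline{\beta}_k$, and splitting by the decidable condition $\beta_k = 0$ or $\beta_k = 1$ yields an infinite c.e.\ subset of $A$ or of $\overline{A}$ whenever $B \cap E$ is infinite, contradicting bi-immunity of $A$.

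The hard part is the residual case where $B \cap E$ is finite: $B$ is so sparse that its lower bounds on $c^{(k+1)}$ never surpass $\beta^{(k+1)}$ for almost every $k \in B$. In this regime $B \subseteq C$ supplies no upper bound on $c^{(k+1)}$, so the dual conclusion $\delta_k = 0$ cannot be forced from $B$ alone. My plan for this residual case is to approximate $\beta$ by its dyadic truncations $\beta_n$, apply the dyadic case to conclude each $a + \beta_n \mod 1$ is bi-immune, and then show that the positions at which its bit sequence disagrees with $c$ form a c.e.\ set (enumerable from the computable tail $\beta - \beta_n$ together with the lower-bound information supplied by $B$), so that Lemma \ref{lem:recursive_xor_bi_immune} transfers bi-immunity to $c$. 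Pinning down the effectiveness of these disagreement positions, especially in the sparse regime, is where I expect the main technical obstacle to lie.
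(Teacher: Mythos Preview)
Your route is quite different from the paper's and, as you yourself flag, incomplete in the residual case. The paper never splits on the density of $B$: writing the output bit as $f_i = a_i \oplus b_i \oplus (\text{carry}_i)$, it first invokes Lemma~\ref{lem:recursive_xor_bi_immune} to get $\{a_i \oplus b_i\}$ bi-immune, then argues in a few lines that the carry sequence is itself bi-immune (a putative computable subsequence of the carries, combined with the computable $\{b_{i_k}\}$, is claimed to yield a computable subsequence of $a$), and finally runs a parallel argument for $\{f_i\}$. There is no inverse map, no dyadic case, and no sparse/dense dichotomy.

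The residual case you isolate is a genuine gap, and the dyadic-truncation plan you sketch does not close it. The disagreement set $\{k : (a+\beta_n)_k \ne c_k\}$ consists exactly of the positions to which a carry generated by the tail $\beta - \beta_n$ propagates, and whether that carry reaches position $k$ depends on the bits $a_{k+1}, a_{k+2}, \dots$ of the incomputable sequence $a$ (via runs of $1$'s in $\{a_j \oplus b_j\}$ immediately to the right of $k$). The only effective information you have about those tails is the lower bound on $c^{(k+1)}$ supplied by $B \subseteq C$, and in the sparse regime that bound is, by hypothesis, too weak to decide the carry. So there is no reason the disagreement set should be c.e., and Lemma~\ref{lem:recursive_xor_bi_immune} cannot be invoked as planned.
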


\begin{proof}
Suppose the binary sequence representing $\beta$ is  $b_0 b_1 \dots b_n \dots$.   
Since $\beta$ is a Turing computable number, lemma  \ref{lem:recursive_xor_bi_immune} implies  
that $a_0 \oplus b_0$ \hskip 0.1pc $a_1 \oplus b_1$ \hskip 0.1pc $\dots$ is a bi-immune sequence.

The last step of the computation of \hskip 0.2pc $(a + \beta) \mod 1$ \hskip 0.2pc is the carry.  
Let $c_0 c_1 \dots c_n \dots$ be the binary sequence where $c_i = 1$ if the $i$th element of 
$(a + \beta) \mod 1$ as a binary sequence equals $1 -  a_i \oplus b_i$; otherwise, set  
$c_i = 0$ if the $i$th element of $(a + \beta) \mod 1$ 
as a binary sequence equals $a_i \oplus b_i$.

Claim: the binary sequence $c_0 c_1 \dots c_n  \dots$ is bi-immune.  We can verify 
by contradiction. Suppose there is a binary subsequence $\{c_{i_k}\}$ that is Turing computable.  
Then the subsequence  $\{b_{i_k}\}$ is Turing computable because $\beta$ 
is Turing computable.   Lastly, $a_{i_k} = b_{i_k} \oplus c_{i_k}$, which implies the 
bi-immune sequence  $a =a_0 a_1  \dots a_n  \dots $ has a Turing computable subsequence.

Set $f_i = a_i \oplus b_i \oplus c_i$, which is the binary sequence 
representing $f(a)$. Using a similar argument as for $c_i$, 
if $\{f_i\}$ has a Turing computable subsequence $\{f_{i_k}\}$,    
then the subsequence can only occur at indices when $c_{i_k} = 1$. 
Now $f_{i_k} \oplus c_{i_k} = a_{i_k} \oplus b_{i_k}$ is Turing computable, which 
contradicts that $\{a_i \oplus b_i\}$ is bi-immune.  
\end{proof}

Suppose $\beta$ is irrational; for example, choose $\beta = \frac{1}{2}(\sqrt{5}-1)$.  
We can apply lemma \ref{lem:irrational_rotation_bi_immune} repeatedly.  
We start with a point $p_0$ in $\mathcal{S} = \{0, 1\}^{\mathbb{N}} - \mathcal{T}$, 
where $p_0$ is a bi-immune sequence.    Define the orbit   $\mathcal{O}(f, p_0) =$ 
$\{p_0\} \cup \{ p_{n}:  p_{n} = f(p_{n-1})$  \hskip 0.2pc  
\verb|for all|  \hskip 0.2pc  $n \ge 1 \}$.  
This construction works for any Turing computable irrational number. 
%%  lemma \ref{lem:S1_dense_bi_immune}.  

\begin{lem}\label{lem:S1_dense_bi_immune}  
Suppose  $f(x) = \big{(}x + \beta \big{)} \mod 1$, where $\beta$ is a Turing computable irrational number. 
Identify $\mathcal{S}$ with the interval of real numbers $[0, 1)$.  If $p_0$ in $\mathcal{S}$  
is a bi-immune sequence, then $\mathcal{O}(f, p_0)$ is a dense orbit in $[0, 1)$ 
and each point in $\mathcal{O}(f, p_0)$, expressed as a binary sequence, is a bi-immune sequence. 
\end{lem}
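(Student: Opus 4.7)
The plan is to split the conclusion into two independent parts: first, that $\mathcal{O}(f,p_0)$ is dense in $[0,1)$, and second, that every iterate $p_n = f^n(p_0)$ corresponds to a bi-immune binary sequence. The density claim is classical, while the bi-immunity claim follows from a direct induction that feeds each iterate back into Lemma~\ref{lem:irrational_rotation_bi_immune}.

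For density, a short induction gives $f^n(p_0) = (p_0 + n\beta) \bmod 1$, so the orbit is the image of $\{n\beta \bmod 1 : n \in \mathbb{N}\}$ under the translation $x \mapsto (x + p_0)\bmod 1$ on the circle $\mathbb{R}/\mathbb{Z}$.  Since $\beta$ is irrational, Kronecker's theorem says $\{n\beta \bmod 1 : n \in \mathbb{N}\}$ is dense in $[0,1)$, and translation on the circle is an isometry, hence takes dense subsets to dense subsets.  Therefore $\mathcal{O}(f,p_0)$ is dense in $[0,1)$.

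For the bi-immunity of each point of the orbit, I would proceed by induction on $n$.  The base case $n=0$ is the hypothesis that $p_0$ is a bi-immune sequence.  For the inductive step, assuming that $p_n$ has a bi-immune binary expansion, Lemma~\ref{lem:irrational_rotation_bi_immune} applied to $p_n$ with the same Turing computable real $\beta$ yields that $p_{n+1} = f(p_n) = (p_n + \beta)\bmod 1$ has a bi-immune binary expansion as well.

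The main obstacle, and what requires the most careful bookkeeping, is ensuring that each iterate $p_n$ lies in $\mathcal{S}$ so that Lemma~\ref{lem:irrational_rotation_bi_immune} can legitimately be applied: the lemma operates on elements of $\{0,1\}^{\mathbb{N}}$ via their binary expansions, so each iterate must have a unique non-tail representation.  This reduces to showing each $p_n$ is irrational.  Since $p_0$ is bi-immune it is Turing incomputable, hence not rational (a rational number has an eventually periodic and therefore computable binary expansion).  Because $\beta$ is irrational and $n\beta$ is Turing computable, $p_n = (p_0 + n\beta)\bmod 1$ cannot be rational either — otherwise $p_0$ would equal a rational minus the computable number $n\beta$, and hence be Turing computable, contradicting bi-immunity of $p_0$.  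Every irrational number in $[0,1)$ has a unique binary expansion lying in $\mathcal{S}$, so the induction is well-posed.  Combining the two parts yields the lemma.
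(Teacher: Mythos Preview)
Your proof is correct and follows essentially the same route as the paper: invoke the classical density theorem for irrational rotations (the paper attributes it to Jacobi, you to Kronecker---same result) and apply Lemma~\ref{lem:irrational_rotation_bi_immune} inductively along the orbit. Your proof is in fact more careful than the paper's, which omits the verification that each $p_n$ lies in $\mathcal{S}$; your argument that $p_n$ rational would force $p_0$ computable is a clean way to close that gap.
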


\begin{proof}  
Lemma \ref{lem:irrational_rotation_bi_immune} implies that each point in $\mathcal{O}(f, p_0)$ is bi-immune.    
Since $\beta$ is irrational, Jacobi's theorem implies $\mathcal{O}(f, p_0)$ is dense in $[0, 1)$.  
\end{proof}

%%  \begin{rem}
%%  The bi-immune symmetric group   $G_{\mathfrak{B}}$   is dense in   $\big{(}$Sym$(\mathbb{N}), d\big{)}$. 
%%  \end{rem}

\begin{rem}
The subgroup of Sym($\mathbb{N}$) generated by  $\phi(\mathcal{S})$  contains  $G_{\mathfrak{B}}$.   
\end{rem}

\begin{proof}  
The bi-immune rearrangements generate the bi-immune symmetric group  %%  $G_{\mathfrak{B}}$ 
and the bi-immune sequences are a subset of $\mathcal{S}$.
\end{proof}

Per lemma \ref{lem:S1_dense_bi_immune}, we also know that the bi-immune symmetric group  contains the $\phi$-image 
of all dense orbits $\mathcal{O}(f, p)$, where $f = \big{(}x + \beta \big{)} \mod 1$ 
ranges over all Turing computable irrational numbers $\beta$ 
and where $p$ ranges over all bi-immune sequences in $\mathcal{S}$.

\newpage

\subsection{Some More Properties of $\sigma_{A}$ }

\begin{defn}\label{set_eventually_commutative}  \hskip 1pc  {\it Eventually Commutative}  

\noindent 
Suppose set $A$ is ordered as $A = \{a_0, a_1, \dots  \}$ where $a_k < a_{k+1}$.   
Set $A$ is eventually commutative if there exists $M$ such that for any $j \ne k$
where $j, k \ge M$ implies that $|a_j - a_k| > 1$.   

\end{defn}

Observe that if set $A$ is eventually commutative, 
then $\sigma_{A} \circ \sigma_{A} = \sigma_{(r_0, r_1, \dots, r_n)}$.  
Lemma \ref{commute_order_lemma} implies that the 
$\sigma_j$ and $\sigma_k$ with $j, k \ge M$ can be swapped with each element in $\sigma_{A}$ 
until $\sigma_j$ is next to $\sigma_k$.

\smallskip   

In contrast to lemma \ref{bi_finite_set}, lemma \ref{recursive_sigma_preserves_bi_immunity} shows that 
$\sigma_R(A)$ is bi-immune when $A$ is bi-immune and $R$ is a Turing computable set.  

%%  \newpage

\begin{lem}\label{recursive_sigma_preserves_bi_immunity}

If $R$ is Turing computable and $A$ is bi-immune, then $\sigma_R(A)$ is bi-immune.   

\end{lem}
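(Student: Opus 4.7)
My plan is to show that the Turing-computability of $R$ makes $\sigma_R : \mathbb{N} \to \mathbb{N}$ a Turing-computable injection with image omitting at most a finite, computable set, and then to transfer bi-immunity of $A$ to $\sigma_R(A)$ by pulling back hypothetical c.e.\ witnesses.

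First, I would show $\sigma_R$ is Turing computable. The key observation is that the transposition $\sigma_{(r)}$ swaps only the values at positions $r$ and $r+1$, so only transpositions with $r \le n$ can affect the value of $\sigma_R$ at position $n$. Because $R$ is Turing computable, the finite set $R \cap [0,n]$ can be listed mechanically, and composing the corresponding transpositions in increasing order of $r$ yields $\sigma_R(n)$ in finitely many steps. Injectivity of $\sigma_R$ is immediate since it is a pointwise limit of finite compositions of transpositions (if $\sigma_R(m) = \sigma_R(n)$ with $m \ne n$, some stage of the finite composition would already fail to be a bijection). By remark \ref{rem:tail_set_not_onto}, $\sigma_R$ is surjective unless $R$ is a tail set; when $R$ is a tail set, a direct unpacking of the eventual ``shift'' behaviour in definition \ref{defn:set_rearrange} shows that exactly one (computable) element is omitted from $\mathrm{im}(\sigma_R)$.

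Next, $\sigma_R(A)$ is infinite because $A$ is infinite and $\sigma_R$ is injective, and $\overline{\sigma_R(A)} \supseteq \sigma_R(\overline{A})$ is infinite because $\overline{A}$ is infinite by the bi-immunity of $A$. For the core argument, suppose toward contradiction that some infinite c.e.\ set $B$ satisfies $B \subseteq \sigma_R(A)$. Define $C = \{n : \sigma_R(n) \in B\}$. Since $\sigma_R$ is Turing computable and $B$ is c.e., $C$ is c.e.\ (semi-decide $n \in C$ by enumerating $B$ and computing $\sigma_R(n)$). Injectivity of $\sigma_R$ together with $B \subseteq \sigma_R(A)$ forces $C$ to be infinite and $C \subseteq A$, contradicting the bi-immunity of $A$. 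For the complement, if $B \subseteq \overline{\sigma_R(A)}$ is infinite and c.e., use the decomposition
\[
\overline{\sigma_R(A)} \;=\; \bigl(\mathbb{N} \setminus \mathrm{im}(\sigma_R)\bigr) \,\cup\, \sigma_R(\overline{A})
\]
and the finiteness of the first component to extract an infinite c.e.\ subset $B' \subseteq \sigma_R(\overline{A})$; then $\sigma_R^{-1}(B')$ is an infinite c.e.\ subset of $\overline{A}$, again contradicting bi-immunity.

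I expect the main obstacle to be cleanly handling the tail-set case, where $\sigma_R$ is not onto: one must justify that $\mathbb{N} \setminus \mathrm{im}(\sigma_R)$ is a finite (in fact singleton) computable set, so that the finite correction used in the complement argument is legitimate. Once this is in place, everything else reduces to the standard principle that a Turing-computable injection pulls c.e.\ sets back to c.e.\ sets, which is exactly what the bi-immunity of $A$ forbids on both $A$ and $\overline{A}$.
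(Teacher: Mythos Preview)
Your proof is correct and follows essentially the same route as the paper: both arguments use the Turing computability of $\sigma_R$ to pull back a hypothetical infinite c.e.\ subset of $\sigma_R(A)$ (resp.\ of its complement) to an infinite c.e.\ subset of $A$ (resp.\ $\overline{A}$), contradicting bi-immunity. The only notable difference is that you explicitly treat the tail-set case where $\sigma_R$ fails to be surjective, whereas the paper simply invokes $\sigma_R^{-1}$ without addressing this; your handling via the decomposition $\overline{\sigma_R(A)} = (\mathbb{N}\setminus\mathrm{im}(\sigma_R)) \cup \sigma_R(\overline{A})$ is a clean way to close that gap.
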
    

\begin{proof}     
$\sigma_R(A)$ is infinite because $\sigma_R$ is one-to-one.    
Let $Q = \sigma_R(A)$.   It remains to show that $Q$ and $\overline{Q}$ satisfy condition (ii).
By contradiction, suppose $B \cap Q = \emptyset$ for some c.e.  set $B$.    
Note that ${\sigma}^{-1}_R$ is Turing computable because $\sigma_R$ is Turing computable. 
Thus, ${\sigma}^{-1}_R(B)$ is c.e.   
Now  ${\sigma}^{-1}_R(B) \cap A = \emptyset$ 
because  ${\sigma}^{-1}_R$ is one-to-one and ${\sigma}^{-1}_R(Q) = A$.  
This contradicts that $A$ is bi-immune.

A similar argument holds for $\overline{Q}$.   By contradiction,  
suppose $B  \cap \overline{Q} = \emptyset$ for some c.e.  set $B$.  
Then $B  \subset Q$.    Since ${\sigma}^{-1}_R(Q) = A$, 
then ${\sigma}^{-1}_R(B) \subset A$, 
which implies that ${\sigma}^{-1}_R(B) \cap \overline{A} = \emptyset$.   
This contradicts that $A$ is  bi-immune.   
\end{proof}

\begin{defn}
%%  A permutation $\sigma: \mathbb{N} \rightarrow  \mathbb{N}$ is {\it finite} if there exists $M \in  \mathbb{N}$ 
%%  such that $\sigma(x) = x$ when $x > M$.   
$\sigma$ is called a {\it consecutive cycle} if it is finitary and $\sigma = (m, m+1, \dots, n)$.   In other words,
$\sigma(k) = k+1$ when $m \le k < n$, $\sigma(n) = m$ and $\sigma(x) = x$ when $x< m$ or $x > n$.  Consider consecutive cycles
$(m_1 \dots n_1)$ and $(m_2 \dots n_2)$.  Suppose  $n_1 < n_2$.  These cycles are {\it disconnected} if $m_2 - n_1 > 1$.   
Consecutive cycles as a composition $ \dots (m_k \dots n_k) \circ \dots \circ (m_2 \dots n_2) \circ (m_1 \dots n_1)$ are 
{\it increasing} if $m_{j+1} > m_j$ for each $j \ge 1$. 

\end{defn}

\begin{lem}\label{consec_cycle}

If $A$ is bi-immune, then $\sigma_{A}$ is an infinite composition of 
disconnected, increasing, consecutive cycles. 

\end{lem}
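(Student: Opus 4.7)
The plan is to read off the cycle structure of $\sigma_A$ directly from the pattern of runs and gaps in $A$. First, I use bi-immunity to show that neither $A$ nor $\overline{A}$ contains an infinite run of consecutive integers: any tail set $\{n, n+1, n+2, \dots\}$ is an infinite, computably enumerable set, so containment in $A$ would force $\overline{A}$ to miss it entirely, contradicting the immunity of $A$; the symmetric argument on $\overline{A}$ rules out infinite runs in $\overline{A}$. Consequently $A$ partitions into maximal blocks of consecutive integers $[r_1, s_1], [r_2, s_2], \dots$ with $r_j \le s_j$, $s_j + 1 \notin A$, and $r_{j+1} > s_j + 1$. Each block and each interposing gap is finite, and since $A$ itself is infinite there must be infinitely many blocks.

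Next I analyze a single block. By definition \ref{defn:set_rearrange} the transpositions whose indices lie in $[r_j, s_j]$ are applied in the order $\sigma_{(r_j)}, \sigma_{(r_j + 1)}, \dots, \sigma_{(s_j)}$. Lemma \ref{lem:increase_order_lemma} collapses this composition to $\sigma_{(r_j, r_j+1, \dots, s_j)}$, and the explicit formula for that rearrangement computed earlier in the paper identifies it with the consecutive cycle $(r_j, r_j+1, \dots, s_j+1)$, supported on the interval $[r_j, s_j+1]$. In particular, writing $n_j = s_j + 1$ and $m_{j+1} = r_{j+1}$, one has $n_j < m_{j+1}$ because $s_j + 1 \notin A$ while $r_{j+1} \in A$, so the cycles coming from distinct blocks have disjoint supports and strictly increasing starting indices.

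Finally I glue the blocks together. Since consecutive blocks are separated by at least one index of $\overline{A}$, any index $a$ in block $j$ and any index $a'$ in a later block satisfy $|a - a'| \ge 2$, so lemma \ref{commute_remark} forces the corresponding transpositions to commute across blocks. Applying lemma \ref{commute_order_lemma} rearranges the partial product through the $n$th block into the composition, in block order, of the $n$ consecutive cycles identified in the previous step. Taking the pointwise limit as in definition \ref{defn:set_rearrange}---which is well-defined because for each fixed $m$ only finitely many blocks affect $\sigma_A(m)$---presents $\sigma_A$ in the claimed form.

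I expect the first step to carry the real content of the argument: it is where bi-immunity is used essentially, via a tail-set argument on both $A$ and $\overline{A}$. Everything after that is algebraic bookkeeping powered by the commutation and rearrangement identities (lemmas \ref{commute_remark}, \ref{lem:increase_order_lemma}, \ref{commute_order_lemma}) already established in the paper.
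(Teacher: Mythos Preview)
Your argument is exactly the unpacking the paper leaves to the reader: its own proof is the single sentence ``follows from the previous definitions and lemmas,'' and you supply that derivation, using the tail-set observation to force $A$ into finitely long runs and then invoking Lemmas~\ref{commute_remark}, \ref{lem:increase_order_lemma}, and \ref{commute_order_lemma} to assemble $\sigma_A$ block by block.

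One point is worth flagging. You verify $n_j < m_{j+1}$ (equivalently $r_{j+1} \ge s_j + 2$), which yields disjoint supports, but the paper's definition of \emph{disconnected} asks for the strictly stronger inequality $m_{j+1} - n_j > 1$. Bi-immunity does not force this: by Lemma~\ref{bi_finite_set} a finite modification of a bi-immune set remains bi-immune, so one can arrange adjacent maximal runs such as $\{5,6,7\}$ and $\{9,10\}$ with only $8$ omitted, and then the cycles $(5,6,7,8)$ and $(9,10,11)$ satisfy $m_2 - n_1 = 1$. Your proof therefore establishes what is actually true---an infinite composition of \emph{disjoint}, increasing, consecutive cycles---and the residual gap is an over-strong clause in the paper's definition of ``disconnected'' rather than a defect in your reasoning.
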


\begin{proof}  
Lemma  \ref{consec_cycle} follows from the previous definitions and lemmas.   
\end{proof}

\smallskip

\begin{rem}
If $A$ is bi-immune, then $\sigma_{A}(\mathbb{E})$ is Borel-1 normal.  
\end{rem}   

\noindent $\mathbb{E}$ equals $(1 0 1 0 \dots)$ when identified as a binary sequence, so  
this remark follows from lemma \ref{consec_cycle}.

\newpage

%%%%%%%%%%%%%%%%%%%%%%%%%%%%%%%%%%%%%%%%%%%%%%%%%%%%%%%%%%%%%%%%
%%%%%%%%%%%%%      bi_immune_references.tex     %%%%%%%%%%%%%%%%
%%%%%%%%%%%%%%%%%%%%%%%%%%%%%%%%%%%%%%%%%%%%%%%%%%%%%%%%%%%%%%%%

\bibliographystyle{plain}

\begin{thebibliography}{1}

\bibitem{cameron}  Peter J. Cameron. {\em Oligomorphic Permutation Groups}.  
                   London Mathematical Society Lecture Note Series {\bf 152}.  
                   Cambridge University Press, 1990.


\bibitem{downey}  Rodney Downey and Denis Hirschfeldt.  {\em Algorithmic Randomness and Complexity}.  Springer-Verlag, 2010.


\bibitem{fiske_qaem}   Michael Stephen Fiske.  Quantum Random Active Element Machine.  
                       {\em Unconventional Computation and Natural Computation}.
                       LNCS 7956. Springer-Verlag, 2013, 252--254.  


\bibitem{fiske_TIC} Michael Stephen Fiske.   Turing Incomputable Computation. 
{\em Turing-100 Proceedings:  The Alan Turing Centenary}.  EasyChair. {\bf 10}, 2012, 69--91.  
%% \noindent  \url{http://www.easychair.org/publications/?page=1303694832}



\bibitem{herrero}   Miguel Herrero-Collantes and Juan Carlos Garcia-Escartin.   
                    Quantum random number generators.  {\em Reviews of Modern Physics}. 
                    {\bf 89}(1), 015004, APS, Feb. 22, 2017.  


%%  \bibitem{martin_lof}  P. Martin-L\"{o}f.  The Definition of Random Sequences.  Information and Control.  
%%  {\bf 9}, 602 -- 619 (1966).   
%%  \smallskip


\bibitem{macpherson}   M. Bhattacharjee, D. Macpherson, R.G. M\"{o}ller, P.M. Neumann.  {\em Notes on Infinite Permutation Groups}.
                       Lecture Notes in Mathematics. {\bf 1698}, Springer-Verlag, 1998.


\bibitem{post}   Emil Post. Recursively Enumerable Sets of Positive Integers and their Decision Problems.
                 Bulletin of the American Mathematical Society.  {\bf 50}, 1944, 284--316.


%%  \bibitem{rogers}  Hartley Rogers, Jr. {\em Theory of Recursive Functions and Effective Computability}. MIT Press. 1987.  
%% \smallskip

   
\bibitem{turing} Alan M. Turing. On computable numbers, with an application to the Entscheidungsproblem.  
{\em Proceedings of the London Mathematical Society}. Series 2   {\bf  42} (Parts 3 and 4), 1936, 230--265.   
%%  A correction,   ibid. {\bf 43},  1937, 544-546.          

    
\end{thebibliography}

\end{document}